\pdfoutput=1
\RequirePackage{ifpdf}
\ifpdf 
\documentclass[pdftex]{sigma}
\else
\documentclass{sigma}
\fi

\numberwithin{equation}{section}

\newtheorem{Theorem}{Theorem}[section]
\newtheorem*{Theorem*}{Theorem}

\newtheorem{Lemma}[Theorem]{Lemma}

 { \theoremstyle{definition}

\newtheorem{Example}[Theorem]{Example}
\newtheorem{Remark}[Theorem]{Remark} }

\newcommand\id{\mathrm{id}}

\begin{document}

\newcommand{\arXivNumber}{2306.16411}

\renewcommand{\PaperNumber}{103}

\FirstPageHeading

\ShortArticleName{Expansions and Characterizations of Sieved Random Walk Polynomials}

\ArticleName{Expansions and Characterizations of Sieved Random\\ Walk Polynomials}

\Author{Stefan KAHLER~$^{\rm abc}$}

\AuthorNameForHeading{S.~Kahler}

\Address{$^{\rm a)}$~Fachgruppe Mathematik, RWTH Aachen University,\\
\hphantom{$^{\rm a)}$}~Pontdriesch 14-16, 52062 Aachen, Germany}
\EmailD{\href{kahler@mathematik.rwth-aachen.de}{kahler@mathematik.rwth-aachen.de}}
\Address{$^{\rm b)}$~Lehrstuhl A f\"{u}r Mathematik, RWTH Aachen University, 52056 Aachen, Germany}

\Address{$^{\rm c)}$~Department of Mathematics, Chair for Mathematical Modelling,\\
\hphantom{$^{\rm c)}$}~Chair for Mathematical Modeling of Biological Systems, Technical University of Munich,\\
\hphantom{$^{\rm c)}$}~Boltzmannstr.~3, 85747 Garching b.~M\"{u}nchen, Germany}

\ArticleDates{Received July 03, 2023, in final form December 01, 2023; Published online December 22, 2023}

\Abstract{We consider random walk polynomial sequences $(P_n(x))_{n\in\mathbb{N}_0}\subseteq\mathbb{R}[x]$ given by recurrence relations $P_0(x)=1$, $P_1(x)=x$, $x P_n(x)=(1-c_n)P_{n+1}(x)+c_n P_{n-1}(x),$ $n\in\mathbb{N}$ with $(c_n)_{n\in\mathbb{N}}\subseteq(0,1)$. For every $k\in\mathbb{N}$, the $k$-sieved polynomials $(P_n(x;k))_{n\in\mathbb{N}_0}$ arise from the recurrence coefficients $c(n;k):=c_{n/k}$ if $k|n$ and $c(n;k):=1/2$ otherwise. A main objective of this paper is to study expansions in the Chebyshev basis $\{T_n(x)\colon n\in\mathbb{N}_0\}$. As an application, we obtain explicit expansions for the sieved ultraspherical polynomials. Moreover, we introduce and study a sieved version $\mathrm{D}_k$ of the Askey--Wilson operator $\mathcal{D}_q$. It is motivated by the sieved ultraspherical polynomials, a generalization of the classical derivative and obtained from $\mathcal{D}_q$ by letting $q$ approach a $k$-th root of unity. However, for $k\geq2$ the new operator $\mathrm{D}_k$ on $\mathbb{R}[x]$ has an infinite-dimensional kernel (in contrast to its ancestor), which leads to additional degrees of freedom and characterization results for $k$-sieved random walk polynomials. Similar characterizations are obtained for a sieved averaging operator $\mathrm{A}_k$.}

\Keywords{random walk polynomials; sieved polynomials; Askey--Wilson operator; averaging operator; polynomial expansions; Fourier coefficients}

\Classification{42C05; 33C47; 42C10}

\section{Introduction}\label{sec:intro}

In the theory of orthogonal polynomials, it is an important problem to identify properties which characterize specific classes. The literature is extensive; \cite{Al90} provides a valuable survey of such characterization results up to 1990. Our paper takes the newer contributions \cite{IO11,Ka16a,LO08} as a~starting point and deals with new characterizations of sieved random walk polynomials. Let~$\mu$ be a symmetric probability Borel measure on $\mathbb{R}$ with $|\mathsf{supp}\,\mu|=\infty$ and~${\mathsf{supp}\,\mu\subseteq[-1,1]}$, and let $(P_n(x))_{n\in\mathbb{N}_0}\subseteq\mathbb{R}[x]$ be the orthogonal polynomial sequence with respect to~$\mu$, normalized by~$P_n(1)=1$, $n\in\mathbb{N}_0$. In the following, we call such a measure $\mu$ just an `orthogonalization measure', and we call $(P_n(x))_{n\in\mathbb{N}_0}$ the (symmetric) `random walk polynomial sequence' (`RWPS') with respect to $\mu$. The relation to random walks is explained in \cite{CSvD98,vDS93}, for instance. Under the assumptions made on the support, it is well known that if an RWPS is orthogonal with respect to two orthogonalization measures, then these measures must coincide.\footnote{Standard results from the theory of orthogonal polynomials can be found in \cite{Ch78}, for instance.} Moreover, a~sequence~${(P_n(x))_{n\in\mathbb{N}_0}\subseteq\mathbb{R}[x]}$ is an RWPS if and only if it is given by a recurrence relation of the form $P_0(x)=1$ and
\begin{equation}\label{eq:recfund}
x P_n(x)=a_n P_{n+1}(x)+c_n P_{n-1}(x), \qquad n\in\mathbb{N}_0,
\end{equation}
where $c_0:=0$, $(c_n)_{n\in\mathbb{N}}\subseteq(0,1)$ and $a_n:=1-c_n$, $n\in\mathbb{N}_0$ \cite{Ch78,LO08}.\footnote{We make the convention that $0$ times something undefined shall be $0$.}

We consider two related sequences: on the one hand, if $(P_n(x))_{n\in\mathbb{N}_0}$ is an RWPS and ${k\in\mathbb{N}}$ is fixed, then let $(P_n(x;k))_{n\in\mathbb{N}_0}\subseteq\mathbb{R}[x]$ denote the `$k$-sieved RWPS' which corresponds to $(P_n(x))_{n\in\mathbb{N}_0}$, i.e., $(P_n(x;k))_{n\in\mathbb{N}_0}$ satisfies the recurrence relation $P_0(x;k)=1$,
\begin{equation*}
x P_n(x;k)=a(n;k)P_{n+1}(x;k)+c(n;k)P_{n-1}(x;k), \qquad n\in\mathbb{N}_0
\end{equation*}
with
\begin{equation*}
c(n;k):=\begin{cases} c_\frac{n}{k}, & k|n, \\ \frac{1}{2}, & \text{else}, \end{cases}
\end{equation*}
and $a(n;k):=1-c(n;k)$, $n\in\mathbb{N}_0$. Such sieved RWPS and related concepts have been studied in a series of papers by Ismail et al.\ (we particularly point out \cite{IL92}). We refer to the seminal paper~\cite{GvA88} of Geronimo and Van Assche which studies sieved polynomials via polynomial mappings (particularly to \cite[Section VI]{GvA88}). Sieved polynomials are a very fruitful topic in the theory of orthogonal polynomials; more recent contributions in this context are \cite{CdJP20,WLXZ16}, for instance.

On the other hand, for an RWPS $(P_n(x))_{n\in\mathbb{N}_0}$ let $(P_n^{\ast}(x))_{n\in\mathbb{N}_0}\subseteq\mathbb{R}[x]$ denote the polynomials which are orthogonal with respect to $\mathrm{d}\mu^{\ast}(x):=\big(1-x^2\big)\mathrm{d}\mu(x)$,
normalized by $P_n^{\ast}(1)=1$, $n\in\mathbb{N}_0$.\footnote{Note that $\mu^{\ast}$ is no longer a probability measure.} Let $h\colon \mathbb{N}_0\rightarrow(0,\infty)$ be given by~\cite{LO08}
\begin{equation*}
h(n):=\frac{1}{\int_\mathbb{R}\!P_n^2(x)\,\mathrm{d}\mu(x)}=\begin{cases} 1, & n=0, \\ \displaystyle \prod_{j=1}^n\frac{a_{j-1}}{c_j}, & \text{else}. \end{cases}
\end{equation*}
Then one explicitly has (cf.\ \cite{Ka16a})
\begin{equation}\label{eq:kernelstarprior}
P_n^{\ast}(x)=C_n^{\ast}\frac{P_{n+2}(x)-P_n(x)}{1-x^2}=
\frac{\sum_{k=0}^{\lfloor\frac{n}{2}\rfloor}h(n-2k)P_{n-2k}(x)}{\sum_{k=0}^{\lfloor\frac{n}{2}\rfloor}h(n-2k)}, \qquad n\in\mathbb{N}_0,
\end{equation}
where $C_n^{\ast}\in\mathbb{R}\backslash\{0\}$ depends on $n$ but is independent of $x$. The first equality in \eqref{eq:kernelstarprior} is an immediate consequence from the observation that
\begin{align*}
\int_\mathbb{R}\big(1-x^2\big)P_n^{\ast}(x)P_k(x)\mathrm{d}\mu(x)&=\int_\mathbb{R}P_n^{\ast}(x)P_k(x)\mathrm{d}\mu^{\ast}(x)=0,
\end{align*}
$n\in\mathbb{N}$, $k\in\{0,\ldots,n-1\}$, which yields that $\big(1-x^2\big)P_n^{\ast}(x)$ must be a linear combination of $P_n(x)$ and $P_{n+2}(x)$; since $\big(1-x^2\big)P_n^{\ast}(x)$ vanishes for $x=1$, the occurring linearization coefficients must be equal up to sign. The second equality in \eqref{eq:kernelstarprior} can be seen as follows: using \eqref{eq:recfund}, it is easy to see that (cf.\ \cite{Ka16b})
\begin{gather*}
\big(1-x^2\big)\sum_{k=0}^{\lfloor\frac{n}{2}\rfloor}h(n-2k)P_{n-2k}(x)=-c_{n+1}c_{n+2}h(n+2)[P_{n+2}(x)-P_n(x)],\qquad n\in\mathbb{N}_0.
\end{gather*}
Therefore, the second equality in \eqref{eq:kernelstarprior} follows from the first. Moreover, we see that
\begin{gather*}
C_n^{\ast}=-\frac{c_{n+1}c_{n+2}h(n+2)}{\sum_{k=0}^{\lfloor\frac{n}{2}\rfloor}h(n-2k)}.
\end{gather*}
Via the Christoffel--Darboux formula (cf.\ \cite{Ch78}), one can show that $C_n^{\ast}$ is also given by
\begin{equation*}
C_n^{\ast}=-\frac{2c_{n+1}c_{n+2}h(n+2)}{\sum_{k=0}^n h(k)+c_{n+1}h(n+1)}.
\end{equation*}
Recall that, given some $q\in(0,1)$, the Askey--Wilson operator $\mathcal{D}_q\colon \mathbb{R}[x]\rightarrow\mathbb{R}[x]$ is defined by linearity and the action~\cite{Is09,IO11}
\begin{equation}\label{eq:askeywilsondef}
\mathcal{D}_q T_n(x)=\frac{q^{\frac{n}{2}}-q^{-\frac{n}{2}}}{\sqrt{q}-\frac{1}{\sqrt{q}}}U_{n-1}(x),\qquad n\in\mathbb{N}_0,
\end{equation}
where $U_{-1}(x):=0$ and $(T_n(x))_{n\in\mathbb{N}_0}$, $(U_n(x))_{n\in\mathbb{N}_0}$ denote the sequences of Chebyshev polynomials of the first and second kind, so $T_n(\cos(\theta))=\cos(n\theta)$, $U_n(\cos(\theta))=\sin((n+1)\theta)/\sin(\theta)$, ${T_0(x)=U_0(x)=1}$, $T_1(x)=x$, $U_1(x)=2x$,
\begin{equation*}
x T_n(x)=\frac{1}{2}T_{n+1}(x)+\frac{1}{2}T_{n-1}(x),x U_n(x)=\frac{1}{2}U_{n+1}(x)+\frac{1}{2}U_{n-1}(x),\qquad n\in\mathbb{N}
\end{equation*}
and
\begin{equation}\label{eq:chebsecondfirstrelation}
U_n(x)=(n+1)T_n^{\ast}(x), \qquad n\in\mathbb{N}_0.
\end{equation}
Note that $(T_n(x))_{n\in\mathbb{N}_0}$ is the only RWPS which is invariant under sieving with arbitrary $k$. The classical derivative $\mathrm{d}/\mathrm{d}x$ is the limiting case $q\to1$ of $\mathcal{D}_q$; more precisely, \eqref{eq:askeywilsondef} is a $q$-extension of the well-known relation
\begin{equation}\label{eq:chebobserv2}
\frac{\mathrm{d}}{\mathrm{d}x}T_n(x)=n U_{n-1}(x), \qquad n\in\mathbb{N}_0.
\end{equation}
Relations~\eqref{eq:askeywilsondef}, \eqref{eq:chebsecondfirstrelation} and \eqref{eq:chebobserv2} can be interpreted in the following way: if $P_n(x)=T_n(x)$, $n\in\mathbb{N}_0$, then $P_n^\prime(x)=P_n^\prime(1)P_{n-1}^{\ast}(x)$ and $\mathcal{D}_q P_n(x)=\mathcal{D}_q P_n(1)P_{n-1}^{\ast}(x)$, $n\in\mathbb{N}$. The question which RWPS share the first of these properties has been answered in \cite[Lemma~1, Theorem~1]{LO08} and involves the ultraspherical polynomials:

\begin{Theorem}[Lasser--Obermaier 2008]\label{thm:lasserobermaierref}
The following are equivalent:
\begin{itemize}\itemsep=0pt
 \item[$(i)$] $P_n(x)=P_n^{(1/(2c_1)-3/2)}(x), \qquad n\in\mathbb{N}_0$,
  \item[$(ii)$] $P_n^\prime(x)=P_n^\prime(1)P_{n-1}^{\ast}(x),\qquad n\in\mathbb{N}$.
 \end{itemize}
\end{Theorem}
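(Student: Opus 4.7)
The plan is to handle the two implications separately.

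For the forward direction $(i)\Rightarrow(ii)$, I would invoke the classical differentiation identity for ultraspherical polynomials: the derivative of an ultraspherical polynomial of parameter $\alpha$ is, up to an explicit constant, the ultraspherical polynomial of degree one less with parameter $\alpha+1$. Since the measure $\mathrm{d}\mu^*(x) = (1-x^2)\,\mathrm{d}\mu(x)$ shifts the ultraspherical parameter by $+1$, and since both $P_{n-1}^{*}$ and the derivative $P_n'$ have their normalizations pinned down, what remains is to identify the constant of proportionality; evaluating at $x = 1$ forces it to equal $P_n'(1)$, giving $(ii)$.

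For the reverse direction $(ii)\Rightarrow(i)$, I would first use the first equality in \eqref{eq:kernelstarprior} (with $n$ replaced by $n-1$) to translate $(ii)$ into the differential-difference identity
\[
(1-x^2) P_n'(x) = P_n'(1)\, C_{n-1}^{*}\, [P_{n+1}(x) - P_{n-1}(x)], \qquad n \in \mathbb{N}.
\]
The values $P_n'(1)$ are already expressible in $c_1, \ldots, c_{n-1}$ alone (by differentiating \eqref{eq:recfund} and evaluating at $x=1$), and $C_{n-1}^{*}$ is explicit from the closed form derived in the excerpt. Thus the displayed identity can be expanded in the basis $\{P_k(x)\}_{k \in \mathbb{N}_0}$ by using \eqref{eq:recfund} to rewrite $x P_n(x)$ on the left-hand side and reduce everything to a single three-term expansion. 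Matching coefficients will produce a recursive relation among $c_{n+1}$, $c_n$, and $c_{n-1}$.

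The final step is to verify by induction that this recursion has a unique solution once $c_1$ is fixed, and that this solution is precisely the sequence of ultraspherical recurrence coefficients for parameter $\alpha = 1/(2c_1) - 3/2$. A more conceptual alternative is to differentiate the displayed identity once more, use \eqref{eq:recfund} to eliminate the neighbouring polynomials, and arrive at a Bochner-type second-order ODE
\[
(1-x^2) P_n''(x) + A\, x\, P_n'(x) + \lambda_n\, P_n(x) = 0,
\]
with coefficients depending only on $c_1$; Bochner's classification then singles out the symmetric Jacobi (i.e., ultraspherical) polynomials, with the precise parameter forced by the $n = 1$ instance. The main obstacle, in either approach, is the careful bookkeeping of normalizations needed so that the exact parameter $\alpha = 1/(2c_1) - 3/2$ — rather than a close variant — drops out of the $c_1$-comparison.
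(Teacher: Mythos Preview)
The paper does not contain a proof of Theorem~\ref{thm:lasserobermaierref}. It is stated as a result of Lasser and Obermaier, cited from \cite[Lemma~1, Theorem~1]{LO08}, and wherever the case $k=1$ arises later (in particular in the proof of Theorem~\ref{thm:mainsievedaskeywilson}) the author explicitly defers to \cite{LO08}. So there is no ``paper's own proof'' against which your proposal can be compared.

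As to the soundness of your sketch: the forward direction is fine and is exactly the classical argument. For the reverse direction, your first route has a gap. You claim that after inserting \eqref{eq:kernelstarprior} and using \eqref{eq:recfund} the identity $(1-x^{2})P_n'(x)=P_n'(1)\,C_{n-1}^{\ast}[P_{n+1}(x)-P_{n-1}(x)]$ reduces to a ``single three-term expansion'' whose coefficient comparison yields a recursion in $c_{n-1},c_n,c_{n+1}$ alone. But the left-hand side, expanded in the $\{P_j\}$-basis, is \emph{not} three-term a priori; it involves contributions at all parities $n+1,n-1,n-3,\ldots$, and it is precisely condition~(ii) that forces the lower coefficients to vanish. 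Extracting a clean recursion therefore requires an additional mechanism---in \cite{LO08} and in this paper's sieved generalization the device is the recurrence \eqref{eq:recurrencekappasieved} for the Fourier coefficients $\kappa_n(j;k)$, combined with the observation that (ii) is equivalent to $\kappa_n(j;1)=\sigma(n;1)$ for all admissible $j$. Your Bochner alternative is closer in spirit to a self-contained argument, but the assertion that the resulting second-order equation has coefficients ``depending only on $c_1$'' is exactly what needs to be proved: a priori the constants $\lambda_n=P_n'(1)\,C_{n-1}^{\ast}$ involve $c_1,\ldots,c_n$, and showing they collapse to an ultraspherical pattern is the substance of the implication, not a preliminary step.
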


\cite[Theorem 5.2]{IO11} gives a $q$-analogue:

\begin{Theorem}[Ismail--Obermaier 2011]\label{thm:ismailobermaiercontref}
Let $q\in(0,1)$, $\beta\in(0,1/\sqrt{q})$ and $A=\sqrt{\beta}/2+1/(2\sqrt{\beta})$. Moreover, let $(Q_n(x))_{n\in\mathbb{N}_0}$ be orthogonal with respect to a symmetric probability Borel measure~$\mu$ on $\mathbb{R}$ with $|\mathsf{supp}\,\mu|=\infty$ and $\mathsf{supp}\,\mu\subseteq[-A,A]$; furthermore, let $(Q_n(x))_{n\in\mathbb{N}_0}$ be normalized by $Q_n(A)=1(n\in\mathbb{N}_0)$, and let $Q_2(0)=-\beta(1-q)/\big(1-\beta^2 q\big)$. Then the following are equivalent:
\begin{itemize}\itemsep=0pt
\item[$(i)$] $Q_n(x)=P_n(x;\beta|q),\qquad n\in\mathbb{N}_0$,
\item[$(ii)$] $(\mathcal{D}_q Q_n(x))_{n\in\mathbb{N}}$ is orthogonal with respect to $\big(A^2-x^2\big)\mathrm{d}\mu(x)$.
\end{itemize}
\end{Theorem}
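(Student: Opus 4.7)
The plan is to handle the two implications separately. The direction $(i)\Rightarrow(ii)$ is essentially a known identity, while $(ii)\Rightarrow(i)$ requires the bulk of the work.

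For $(i)\Rightarrow(ii)$, I would use the classical lowering relation for the continuous $q$-ultraspherical polynomials, namely that $\mathcal{D}_q P_n(x;\beta|q)$ is (up to a normalization factor) a member of a shifted $q$-ultraspherical family that is orthogonal with respect to the weight obtained by multiplying $\mathrm{d}\mu(x)$ by the factor $\bigl(A^2-x^2\bigr)$. This identity can be verified directly from the explicit expansion of $P_n(x;\beta|q)$ in the Chebyshev basis $(T_n(x))_{n\in\mathbb{N}_0}$ together with the action~\eqref{eq:askeywilsondef} of $\mathcal{D}_q$ on that basis, after rescaling by $A$ to reduce to the interval $[-1,1]$.

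For $(ii)\Rightarrow(i)$, write the symmetric recurrence $xQ_n(x)=\alpha_n Q_{n+1}(x)+\gamma_n Q_{n-1}(x)$; the normalization $Q_n(A)=1$ forces $\alpha_n+\gamma_n=A$ for every~$n$. Apply $\mathcal{D}_q$ to this recurrence using the Askey--Wilson product rule
\begin{equation*}
\mathcal{D}_q(fg)=(\mathcal{D}_q f)(\mathcal{A}_q g)+(\mathcal{A}_q f)(\mathcal{D}_q g),
\end{equation*}
where $\mathcal{A}_q$ denotes the companion averaging operator, together with $\mathcal{D}_q x=1$ and $\mathcal{A}_q x=\tfrac{1}{2}(\sqrt{q}+1/\sqrt{q})\,x$. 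This yields the key relation
\begin{equation*}
\tfrac{1}{2}(\sqrt{q}+1/\sqrt{q})\,x\,\mathcal{D}_q Q_n(x)+\mathcal{A}_q Q_n(x)=\alpha_n\,\mathcal{D}_q Q_{n+1}(x)+\gamma_n\,\mathcal{D}_q Q_{n-1}(x).
\end{equation*}
Setting $R_n(x):=\mathcal{D}_q Q_{n+1}(x)$, the assumed orthogonality of $(R_n)_{n\in\mathbb{N}_0}$ with respect to $\bigl(A^2-x^2\bigr)\mathrm{d}\mu(x)$ provides an independent three-term recurrence $xR_{n-1}(x)=\tilde\alpha_{n-1}R_n(x)+\tilde\gamma_{n-1}R_{n-2}(x)$. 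Subtracting expresses the degree-$n$ polynomial $\mathcal{A}_q Q_n(x)$ as a linear combination of $R_n(x)$ and $R_{n-2}(x)$; expanding both sides in the Chebyshev basis, on which the action of $\mathcal{D}_q$ is explicit by~\eqref{eq:askeywilsondef} and that of $\mathcal{A}_q$ is likewise diagonal, and matching coefficients produces closed two-term recursions for $\alpha_n$ and $\gamma_n$. The constraint $\alpha_n+\gamma_n=A$ removes one degree of freedom, and the prescribed value $Q_2(0)=-\beta(1-q)/\bigl(1-\beta^2 q\bigr)$ determines $\gamma_1$ in terms of $\beta$; the resulting recurrence coefficients then coincide inductively with those of $P_n(\cdot;\beta|q)$.

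The main obstacle is the algebraic bookkeeping in $(ii)\Rightarrow(i)$: the mixing of $\mathcal{D}_q$ and $\mathcal{A}_q$ makes the identity for $\mathcal{A}_q Q_n$ technically involved, and one must carefully isolate the information carried by each Chebyshev coefficient to obtain clean recursions. Moreover, the specific value of $Q_2(0)$ is essential: without it, the product-rule argument leaves $\gamma_1$ as a free parameter and only recovers a one-parameter family of solutions rather than the $q$-ultraspherical polynomials themselves.
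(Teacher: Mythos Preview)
The paper does not prove Theorem~\ref{thm:ismailobermaiercontref}: it is quoted verbatim as \cite[Theorem~5.2]{IO11} and serves only as background and motivation for the sieved analogues developed later. There is therefore no ``paper's own proof'' to compare your proposal against.

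That said, your outline is broadly in the spirit of the original Ismail--Obermaier argument and of the methods this paper deploys for its own results (product rule for $\mathcal{D}_q$, comparison of Fourier coefficients, three-term recurrence manipulations). If you want to see the actual proof, consult \cite{IO11} directly; the present paper's Lemmas~\ref{lma:last} and~\ref{lma:productrule} and the proof of Theorem~\ref{thm:mainsievedaskeywilson} illustrate the analogous machinery in the sieved setting.
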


In Theorems~\ref{thm:lasserobermaierref} and~\ref{thm:ismailobermaiercontref}, $\big(P_n^{(\alpha)}(x)\big)_{n\in\mathbb{N}_0}$ and $(P_n(x;\beta|q))_{n\in\mathbb{N}_0}$ denote the sequences of ultraspherical polynomials which correspond to $\alpha>-1$ and continuous $q$-ultraspherical (Rogers) polynomials which correspond to suitable $q$ and $\beta$, respectively, normalized such that\footnote{The special case $\beta=1$ is excluded in both the ``standard'' normalization of the continuous $q$-ultraspherical polynomials and the original formulation of the cited result \cite[Theorem 5.2]{IO11}. However, Theorem~\ref{thm:ismailobermaiercontref} remains valid with the definition $P_n(x;1|q):=T_n(x)$, $n\in\mathbb{N}_0$ (cf.\ \cite{Ka16a}).} \[P_n^{(\alpha)}(1)=P_n(\sqrt{\beta}/2+1/(2\sqrt{\beta});\beta|q)=1, \qquad n\in\mathbb{N}_0.\] Explicit formulas can be found in \cite{Is09,IO11,KLS10,LO08}. At this stage, we just recall that \smash{$\big(P_n^{(\alpha)}(x)\big)_{n\in\mathbb{N}_0}$} is orthogonal with respect to the probability measure \[\mathrm{d}\mu(x)=\Gamma(2\alpha+2)/\big(2^{2\alpha+1}\Gamma(\alpha+1)^2\big)\cdot\big(1-x^2\big)^\alpha\chi_{(-1,1)}(x)\mathrm{d}x\] and has the recurrence coefficients $c_n=n/(2n+2\alpha+1)$, $n\in\mathbb{N}$ \cite{LO08}. Furthermore, we note the striking limit relation
\begin{equation}\label{eq:limitmotivation}
\lim_{s\to1}P_n\left(x;s^{\alpha k+\frac{k}{2}}|s {\rm e}^\frac{2\pi {\rm i}}{k}\right)=P_n^{(\alpha)}(x;k), \qquad n\in\mathbb{N}_0
\end{equation}
between the continuous $q$-ultraspherical polynomials and the sieved ultraspherical polynomials \cite[Section 2]{AAA84}.

In \cite[Theorems 2.1 and 2.3]{Ka16a}, we sharpened the abovementioned Lasser--Obermaier result Theorem~\ref{thm:lasserobermaierref} and Ismail--Obermaier result Theorem~\ref{thm:ismailobermaiercontref} by showing that the characterizations remain valid if $n$ in (ii) is replaced by $2n-1$.\footnote{Results which are cited from \cite{Ka16a} can also be found in \cite{Ka16b}.}

A main purpose of the present paper is to study the interplay between the transitions $(P_n(x))_{n\in\mathbb{N}_0}\longrightarrow(P_n(x;k))_{n\in\mathbb{N}_0}$, $(P_n(x))_{n\in\mathbb{N}_0}\longrightarrow(P_n^{\ast}(x))_{n\in\mathbb{N}_0}$ and a ``sieved version'' of the Askey--Wilson operator. The limit relation \eqref{eq:limitmotivation} motivates our research in the following way: if one defines a corresponding ``sieved Askey--Wilson operator'' $\mathrm{D}_k\colon \mathbb{R}[x]\rightarrow\mathbb{R}[x]$ via
\begin{align}
\notag \mathrm{D}_k T_n(x):&=\lim_{s\to1}\frac{\left(\sqrt{s {\rm e}^\frac{2\pi {\rm i}}{k}}\right)^n-\left(\sqrt{s {\rm e}^\frac{2\pi {\rm i}}{k}}\right)^{-n}}{\sqrt{s {\rm e}^\frac{2\pi {\rm i}}{k}}-\frac{1}{\sqrt{s {\rm e}^\frac{2\pi {\rm i}}{k}}}}U_{n-1}(x)\\
\label{eq:Dkdef} &=U_{n-1}\left(\left|\cos\left(\frac{\pi}{k}\right)\right|\right)U_{n-1}(x), \qquad n\in\mathbb{N}_0
\end{align}
and linear extension, it is a natural question to ask whether, in analogy to Theorems~\ref{thm:lasserobermaierref} and~\ref{thm:ismailobermaiercontref}, $\mathrm{D}_k$ characterizes the sieved ultraspherical polynomials \smash{$\big(P_n^{(\alpha)}(x;k)\big)_{n\in\mathbb{N}_0}$}.\footnote{In the definition of $\mathrm{D}_k$ and in the definition of $\mathrm{A}_k$ below, $\sqrt{.}$ shall denote the principal value of the square root.} At first sight, this might be a reasonable conjecture. However, observe that if $k\geq2$, then the kernel of $\mathrm{D}_k$ becomes infinite-dimensional because $U_{n-1}(\cos(\pi/k))=\sin((n\pi)/k)/\sin(\pi/k)$ becomes zero for infinitely many $n\in\mathbb{N}_0$ (whereas the operators $\mathrm{d}/\mathrm{d}x=\mathrm{D}_1$ and $\mathcal{D}_q$ have finite-dimensional kernels). This important property might give reason to expect an additional degree of freedom. The situation is also very different to results of Ismail and Simeonov \cite{IP12} where Theorems~\ref{thm:lasserobermaierref} and~\ref{thm:ismailobermaiercontref} have been unified and extended to larger classes---but still for operators which reduce the polynomial degree by a fixed positive integer. In fact, the answer will depend on $k$. These results are given in Section~\ref{sec:sieved} and rely on an expansion result which is provided (and applied to an explicit example) in Section~\ref{sec:expansionssieved}. It turns out that as soon as $k\geq2$ the operator $\mathrm{D}_k$ does not lead to characterizations of sieved ultraspherical polynomials but to characterizations of \textit{arbitrary} $k$-sieved RWPS.\footnote{We say an RWPS $(P_n(x))_{n\in\mathbb{N}_0}$ to be `$k$-sieved' (without further specification) if it is $k$-sieved with respect to some RWPS or, equivalently, if $c_n=1/2$ if $k\,{\not|}\,n$.} Moreover, we present a characterization which involves the eigenvectors of the linear ``sieved averaging operator'' $\mathrm{A}_k\colon \mathbb{R}[x]\rightarrow\mathbb{R}[x]$,
\begin{align}
\notag \mathrm{A}_k T_n(x):&=\lim_{s\to1}\frac{\bigl(\sqrt{s {\rm e}^\frac{2\pi {\rm i}}{k}}\bigr)^n+\bigl(\sqrt{s {\rm e}^\frac{2\pi {\rm i}}{k}}\bigr)^{-n}}{2}T_n(x)\\
\label{eq:Akdef} &=T_n\left(\left|\cos\left(\frac{\pi}{k}\right)\right|\right)T_n(x), \qquad n\in\mathbb{N}_0.
\end{align}
The definition of $\mathrm{A}_k$ is motivated by \eqref{eq:limitmotivation} and the classical $q$-averaging operator $\mathcal{A}_q\colon\mathbb{R}[x]\rightarrow\mathbb{R}[x]$ \cite{Is09,IO11}
\begin{equation*}
\mathcal{A}_q T_n(x)=\frac{q^{\frac{n}{2}}+q^{-\frac{n}{2}}}{2}T_n(x), \qquad n\in\mathbb{N}_0.
\end{equation*}
$\mathcal{A}_q$ is a $q$-analogue of the identity operator and appears in the product rule of the Askey--Wilson operator. The characterization via $\mathrm{A}_k$ will also be given in Section~\ref{sec:sieved}, and it will be motivated by our following result on continuous $q$-ultraspherical polynomials \cite[Theorem~2.4]{Ka16a}:

\begin{Theorem}\label{thm:maincontqultrasph2}
Under the conditions of Theorem~{\rm \ref{thm:ismailobermaiercontref}} and the additional assumption that $\beta\leq1$, the following are equivalent: \begin{itemize}\itemsep=0pt
\item[$(i)$] $Q_n(x)=P_n(x;\beta|q)$, $n\in\mathbb{N}_0$,
\item[$(ii)$] the quotient \[
\frac{\int_\mathbb{R}\mathcal{A}_q Q_{n+1}(x)Q_{n-1}(x)\mathrm{d}\mu(x)}{\int_\mathbb{R}\mathcal{D}_q Q_{n+1}(x)Q_n(x)\mathrm{d}\mu(x)}\] is independent of $n\in\mathbb{N}$.
\end{itemize}
\end{Theorem}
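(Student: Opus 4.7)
Since Theorem~\ref{thm:maincontqultrasph2} is stated as our earlier result from~\cite{Ka16a}, the proof strategy I would follow mirrors that of Theorem~\ref{thm:ismailobermaiercontref} and exploits the diagonal action of $\mathcal{A}_q$ and $\mathcal{D}_q$ on the Chebyshev basis.

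For the direction $(i)\Rightarrow(ii)$, the plan is to invoke the standard action of $\mathcal{D}_q$ and $\mathcal{A}_q$ on the continuous $q$-ultraspherical polynomials: $\mathcal{D}_q P_{n+1}(x;\beta|q)$ is a scalar multiple of $P_n(x;\beta q|q)$, and, up to normalization, $\big(A^2-x^2\big)\mathrm{d}\mu(x)$ is the orthogonality measure of $(P_n(x;\beta q|q))_{n\in\mathbb{N}_0}$. This identifies the denominator via orthogonality together with the connection coefficients between $P_n(x;\beta|q)$ and $P_n(x;\beta q|q)$. For the numerator, I would use the analogous formula expressing $\mathcal{A}_q P_{n+1}(x;\beta|q)$ as an explicit combination of $P_{n+1}(x;\beta q|q)$ and $P_{n-1}(x;\beta q|q)$, and again extract a single Fourier coefficient. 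Dividing, the $q$-Pochhammer prefactors should collapse to an $n$-independent quantity.

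For the harder direction $(ii)\Rightarrow(i)$, expand $Q_n(x)=\sum_k\gamma_{n,k}T_k(x)$ and observe that $\mathcal{A}_q Q_{n+1}$ preserves both degree and parity, while $\mathcal{D}_q Q_{n+1}$ lowers the degree by one and preserves parity. Hence
\[
\int_{\mathbb{R}} \mathcal{A}_q Q_{n+1}(x) Q_{n-1}(x)\,\mathrm{d}\mu(x)=\alpha_{n+1}\|Q_{n-1}\|_\mu^2, \quad \int_{\mathbb{R}} \mathcal{D}_q Q_{n+1}(x) Q_n(x)\,\mathrm{d}\mu(x)=\delta_{n+1}\|Q_n\|_\mu^2,
\]
where $\alpha_{n+1}$ and $\delta_{n+1}$ are the respective coefficients in the $(Q_k)$-expansion. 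Using the standard identity $\|Q_n\|_\mu^2/\|Q_{n-1}\|_\mu^2=c_n/a_{n-1}$, the constant-ratio hypothesis becomes a recurrence on the $(c_n)$, initialized by the data $c_1$ that is determined from the prescribed value $Q_2(0)=-\beta(1-q)/\big(1-\beta^2 q\big)$. Induction on $n$ should then force the $(c_n)$ to coincide with those of $(P_n(x;\beta|q))_{n\in\mathbb{N}_0}$.

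The main technical obstacle is to express $\alpha_{n+1}$ and $\delta_{n+1}$ cleanly in terms of the $(c_n)$: this requires tracking how the Chebyshev eigenvalues attached to $\mathcal{A}_q$ and $\mathcal{D}_q$ interact both with the three-term recurrence for $(Q_n)$ and with the change of basis between $(T_k)$ and $(Q_k)$. The assumption $\beta\leq 1$ is then used to secure $A\geq 1$ and the sign conventions that guarantee that the resulting recurrence has a unique positive solution, namely the $q$-ultraspherical values of $(c_n)$.
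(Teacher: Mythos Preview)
The paper does not contain a proof of Theorem~\ref{thm:maincontqultrasph2}; it is quoted verbatim as \cite[Theorem~2.4]{Ka16a} and used only as motivation for the sieved analogue Theorem~\ref{thm:mainsievedaveraging}. You correctly flag this in your first sentence, so there is nothing in the present paper to compare your argument against.

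As a standalone sketch your outline is reasonable but still schematic. The direction $(i)\Rightarrow(ii)$ is fine: the formulas for $\mathcal{D}_q$ and $\mathcal{A}_q$ acting on $P_n(x;\beta|q)$ are classical and do collapse as you say. For $(ii)\Rightarrow(i)$ the architecture is right---extract the two leading Fourier coefficients $\alpha_{n+1}(n-1)$ and $\kappa_{n+1}(n)$ in the $(Q_k)$-basis, convert the constant-ratio hypothesis into a two-term recursion for the $c_n$, and solve it---but the actual work is hidden in your phrase ``tracking how the Chebyshev eigenvalues \ldots interact with the three-term recurrence.'' Concretely, one needs closed formulas of the type proved here in Lemma~\ref{lma:last}(ii),(iii) (the $q$-versions are in \cite{IO11,LO08}): $\kappa_{n+1}(n)$ depends only on $c_{n+1}$ via the leading connection coefficient, while $\alpha_{n+1}(n-1)$ involves the partial sum $\sum_{j=1}^{n} a_{j-1}c_j$. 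Plugging these into your constant-ratio condition yields a first-order nonlinear recursion for $c_{n+1}$ in terms of $c_1,\dots,c_n$; the initial condition on $Q_2(0)$ fixes $c_1$, and induction then forces the continuous $q$-ultraspherical values. Your remark about $\beta\le1$ is on target: it ensures $A\ge1$ so that the support is contained in $[-A,A]$ with the right sign of the leading coefficients, which is needed for the recursion to have a unique admissible solution in $(0,1)$. None of this is wrong, but as written it is a plan rather than a proof.
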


Again it turns out that the passage from $\mathcal{A}_q$ to $\mathrm{A}_k$ leads to characterizations of \textit{arbitrary} $k$-sieved RWPS. However, the information contained in a previously specific---($q$-)ultraspherical---underlying structure is lost due to additional degrees of freedom. Here, these additional degrees of freedom can be traced back to the following fact (which is not obvious and will be established in Section~\ref{sec:sieved}, too): for any RWPS $(P_n(x))_{n\in\mathbb{N}_0}$, the integral \[\int_\mathbb{R}\mathcal{A}_k P_{n+1}(x)P_{n-1}(x)\mathrm{d}\mu(x)\] vanishes if $n\in\mathbb{N}$ is a multiple of $k$. Our characterization results with respect to $\mathrm{D}_k$ and $\mathrm{A}_k$ particularly prove a conjecture which we made in \cite{Ka16b}.

We remark that we used computer algebra systems (Maple) to find explicit formulas as in Example~\ref{example:sievedultrasph} below (which then can be verified by induction etc.), obtain factorizations of multivariate polynomials, get conjectures and so on. The final proofs can be understood without any computer usage, however.

\section{Expansions of sieved polynomials in the Chebyshev basis}\label{sec:expansionssieved}

In this section, we study expansions of sieved polynomials in the Chebyshev basis $\{T_n(x)\colon\allowbreak n\in\mathbb{N}_0\}$. Our result is suitable for explicit computations (see Example~\ref{example:sievedultrasph} below) and provides an important tool for the characterization results presented in Section~\ref{sec:sieved}.

Let $(P_n(x))_{n\in\mathbb{N}_0}$ be an RWPS as in Section~\ref{sec:intro}, and let $k\in\mathbb{N}$. We consider the connection coefficients to the Chebyshev polynomials of the first kind: for each $n\in\mathbb{N}_0$, we define a mapping $r_n\colon\{0,\ldots,\lfloor n/2\rfloor\}\rightarrow\mathbb{R}$ by the expansion
\begin{equation}\label{eq:connectionexpansion}
P_n(x)=\sum_{j=0}^{\lfloor\frac{n}{2}\rfloor}r_n(j)T_{n-2j}(x).
\end{equation}
Moreover, let the mappings $p_n,q_n\colon\{0,\ldots,\lfloor n/2\rfloor\}\rightarrow\mathbb{R}$, $n\in\mathbb{N}_0$, be recursively defined by
\begin{align*}
p_0(0)&:=0,\qquad
q_0(0):=1
\end{align*}
and the coupled system of recursions
\begin{align}
\label{eq:recnew1} p_n(j)&:=\begin{cases} 0, & n\quad \text{even and}\quad j=\frac{n}{2}, \\ \dfrac{(2a_n-1)q_{n-1}(j)+p_{n-1}(j-1)}{2a_n}, & \text{else}, \end{cases}\\
\label{eq:recnew2} q_n(j)&:=\begin{cases} p_{n-1}\left(\dfrac{n}{2}-1\right), & n\quad\text{even and}\quad j=\dfrac{n}{2}, \vspace{1mm}\\ \dfrac{q_{n-1}(j)+(2a_n-1)p_{n-1}(j-1)}{2a_n}, & \text{else} \end{cases}
\end{align}
for $n\in\mathbb{N}$ and $j\in\{0,\ldots,\left\lfloor n/2\right\rfloor\}$, where we set
\begin{equation}\label{eq:specialdefinition1}
p_{n-1}(-1):=0, \qquad n\in\mathbb{N}.
\end{equation}
The following theorem uses the sequences $(p_n)_{n\in\mathbb{N}_0}$ and $(q_n)_{n\in\mathbb{N}_0}$ to obtain the desired expansions of the sieved polynomials $(P_n(x;k))_{n\in\mathbb{N}_0}$ in the basis $\{T_n(x)\colon n\in\mathbb{N}_0\}$. Moreover, the theorem provides a possibility to compute $(p_n)_{n\in\mathbb{N}_0}$ and $(q_n)_{n\in\mathbb{N}_0}$ directly from $(r_n)_{n\in\mathbb{N}_0}$. To avoid case differentiations, we define
\begin{equation}\label{eq:specialdefinition2}
q_{2n-1}(n):=0, \qquad n\in\mathbb{N}.
\end{equation}

\begin{Theorem}\label{thm:mainsieved}
For every $k\in\mathbb{N}$, $n\in\mathbb{N}_0$ and $i\in\{0,\ldots,k-1\}$, one has
\begin{equation}\label{eq:mainsievedintheorem}
P_{k n+i}(x;k)=\sum_{j=0}^{\lfloor\frac{n}{2}\rfloor}[p_n(j)T_{k n-2j k-i}(x)+q_n(j)T_{k n-2j k+i}(x)].
\end{equation}
Moreover, one has
\begin{align}
\label{eq:rpqin} r_n(j)&=p_{n-1}(j-1)+q_{n-1}(j), \qquad n\in\mathbb{N},\quad j\in\left\{0,\ldots,\left\lfloor\frac{n}{2}\right\rfloor\right\},\\
\label{eq:rpqout} q_n(j)&=r_n(j)-p_n(j), \qquad n\in\mathbb{N}_0, \quad j\in\left\{0,\ldots,\left\lfloor\frac{n}{2}\right\rfloor\right\},\\
\label{eq:psum} p_n(j)&=\sum_{i=0}^j[r_n(i)-r_{n+1}(i)], \qquad n\in\mathbb{N}_0, \quad j\in\left\{0,\ldots,\left\lfloor\frac{n}{2}\right\rfloor\right\}.
\end{align}
\end{Theorem}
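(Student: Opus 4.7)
The plan is to prove the main identity \eqref{eq:mainsievedintheorem} by induction on $n$, exploiting the fact that whenever $k\nmid m$ the sieved recurrence coefficients satisfy $a(m;k)=c(m;k)=1/2$, so within a ``block'' $\{kn,kn+1,\ldots,k(n+1)-1\}$ the recurrence reduces to the Chebyshev three-term recurrence $P_{m+1}(x;k)=2xP_m(x;k)-P_{m-1}(x;k)$; the original coefficients $(a_n,c_n)$ re-enter only at the boundary indices $m=kn$. The base case $n=0$ is immediate because iterating the Chebyshev recurrence from $P_0(x;k)=1$, $P_1(x;k)=x$ forces $P_i(x;k)=T_i(x)$ for $i=0,\ldots,k-1$, and with $p_0(0)=0$, $q_0(0)=1$ and the convention $T_{-i}=T_i$ the right-hand side of \eqref{eq:mainsievedintheorem} equals $T_i(x)$.

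The first observation for the inductive step is that the claimed form propagates within a block with \emph{unchanged} coefficients $p_n(j),q_n(j)$: assuming \eqref{eq:mainsievedintheorem} at indices $kn+i-1$ and $kn+i$, the product rule $2xT_m=T_{m+1}+T_{m-1}$ gives $2xP_{kn+i}(x;k)-P_{kn+i-1}(x;k)=\sum_j[p_n(j)T_{kn-2jk-(i+1)}(x)+q_n(j)T_{kn-2jk+(i+1)}(x)]$, which is precisely the claim at $kn+i+1$. Extending one further step past the block (to $i=k$) and reindexing $j\mapsto j+1$ in the ``negative'' Chebyshev indices yields the collapsed expression $P_{k(n+1)}(x;k)=\sum_{j\geq 0}[p_n(j-1)+q_n(j)]\,T_{k(n+1)-2jk}(x)$, which already matches \eqref{eq:mainsievedintheorem} for $i=0$ in block $n+1$ since $T_{-0}=T_0$ merges the two sums.

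Next the plan is to apply the boundary recurrence $xP_{k(n+1)}(x;k)=a_{n+1}P_{k(n+1)+1}(x;k)+c_{n+1}P_{k(n+1)-1}(x;k)$ to produce $P_{k(n+1)+1}(x;k)$. Here $P_{k(n+1)-1}=P_{kn+k-1}$ is supplied by the inductive hypothesis, written with Chebyshev indices $k(n+1)-2jk\pm 1$ after an analogous shift, while $xP_{k(n+1)}$ contributes $\frac{1}{2}[p_n(j-1)+q_n(j)]$ to the coefficient of each of $T_{k(n+1)-2jk-1}$ and $T_{k(n+1)-2jk+1}$. Matching Chebyshev coefficients in $P_{k(n+1)+1}=a_{n+1}^{-1}[xP_{k(n+1)}-c_{n+1}P_{k(n+1)-1}]$ then produces explicit formulas which, after using $c_{n+1}=1-a_{n+1}$ and simplifying, coincide exactly with the right-hand sides of \eqref{eq:recnew1} and \eqref{eq:recnew2}; this establishes \eqref{eq:mainsievedintheorem} for $i=1$ in block $n+1$, and the within-block propagation from the previous paragraph then covers $i=2,\ldots,k-1$, completing the induction.

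Relation \eqref{eq:rpqout} drops out as the $i=0$ (equivalently $k=1$) reading of \eqref{eq:mainsievedintheorem} combined with $P_{kn}(x;k)=P_n(T_k(x))=\sum_j r_n(j)T_{k(n-2j)}(x)$, identity \eqref{eq:rpqin} is exactly the coefficient identification in the collapsed formula above, and \eqref{eq:psum} follows by telescoping because \eqref{eq:rpqin} and \eqref{eq:rpqout} give $p_n(j)-p_n(j-1)=r_n(j)-r_{n+1}(j)$ which sums to the claimed formula via $p_n(-1)=0$ from \eqref{eq:specialdefinition1}. I anticipate the main obstacle to be the careful handling of the ``degenerate'' boundary index $j=n/2$ for even $n$, where $T_{kn-2jk\pm i}$ collapses to $T_{\pm i}$ and the identification $T_{-i}=T_i$ renders the split of the combined coefficient between $p_n(j)$ and $q_n(j)$ non-unique from the Chebyshev expansion alone; the special branches of \eqref{eq:recnew1} and \eqref{eq:recnew2} together with the conventions \eqref{eq:specialdefinition1} and \eqref{eq:specialdefinition2} fix this split, and the bookkeeping task is to verify that at those indices the recursively defined values sum to the unique total coefficient produced by the inductive argument.
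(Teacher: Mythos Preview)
Your proposal is correct and follows essentially the same route as the paper: induction on $n$, within-block propagation via the Chebyshev three-term recurrence, a single application of the boundary recurrence at the multiples of $k$ to pass from $(P_{k(n+1)-1},P_{k(n+1)})$ to $P_{k(n+1)+1}$ (where the coefficient matching reproduces \eqref{eq:recnew1}--\eqref{eq:recnew2}), and a telescoping argument for \eqref{eq:psum}. The one organizational difference is where the Geronimo--Van Assche identity $P_{km}(x;k)=P_m(T_k(x))$ enters: the paper invokes it \emph{inside} the inductive step to obtain $P_{kn}(x;k)=\sum_j r_n(j)T_{kn-2jk}(x)$ directly, and then compares with $2xP_{kn-1}=P_{kn}+P_{kn-2}$ to read off \eqref{eq:rpqin}; you instead reach $P_{k(n+1)}$ purely from the recurrence and appeal to Geronimo--Van Assche only afterward to identify the coefficients $p_n(j-1)+q_n(j)$ with $r_{n+1}(j)$. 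Either way one needs the algebraic identity $p_{n-1}(j-1)+q_{n-1}(j)=p_n(j)+q_n(j)$ (immediate from adding \eqref{eq:recnew1} and \eqref{eq:recnew2}, with the boundary case covered by \eqref{eq:specialdefinition2}); you should state this explicitly when you assert that the collapsed expression ``already matches \eqref{eq:mainsievedintheorem} for $i=0$,'' since that match is not just $T_{-0}=T_0$ but precisely this identity.
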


Concerning the expansions provided by Theorem~\ref{thm:mainsieved}, it is very remarkable that the coefficients of $T_{k n-2j k-i}(x)$ and $T_{k n-2j k+i}(x)$ do not rely on $i$, nor do they rely on $k$. Concerning well-definedness in \eqref{eq:mainsievedintheorem}, note that the ``polynomials'' $T_{-(k-1)}(x),\ldots,T_{-1}(x)$ are not defined; however, they only occur for even $n$ and together with a multiplication with ${p_n(\lfloor n/2\rfloor)=p_n(n/2)=\!0}$, and by our convention the product of $0$ and these undefined polynomials is interpreted as $0$. This convention will also be used in the following proof.

\begin{proof}[Proof of Theorem~\ref{thm:mainsieved}]
Let $k\geq2$ first. We establish the expansion \eqref{eq:mainsievedintheorem} via induction on $n\in\mathbb{N}_0$. It is clear from the recurrence relation for the Chebyshev polynomials of the first kind that $P_i(x;k)=T_i(x)$ for all $i\in\{0,\ldots,k\}$, so \eqref{eq:mainsievedintheorem} is true for $n=0$. Now let $n\in\mathbb{N}$ be arbitrary but fixed and assume the validity of \eqref{eq:mainsievedintheorem} for $n-1$. In particular, we then have
\begin{equation*}
P_{k n-2}(x;k)=\sum_{j=0}^{\lfloor\frac{n-1}{2}\rfloor}[p_{n-1}(j)T_{k n-2j k-2k+2}(x)+q_{n-1}(j)T_{k n-2j k-2}(x)].
\end{equation*}
Due to \eqref{eq:specialdefinition1} and \eqref{eq:specialdefinition2}, the latter equation can be rewritten as
\begin{equation}\label{eq:rec1a}
P_{k n-2}(x;k)=\sum_{j=0}^{\lfloor\frac{n}{2}\rfloor}[p_{n-1}(j-1)T_{k n-2j k+2}(x)+q_{n-1}(j)T_{k n-2j k-2}(x)].
\end{equation}
In the same way, we obtain
\begin{equation}\label{eq:rec1b}
P_{k n-1}(x;k)=\sum_{j=0}^{\lfloor\frac{n}{2}\rfloor}[p_{n-1}(j-1)T_{k n-2j k+1}(x)+q_{n-1}(j)T_{k n-2j k-1}(x)].
\end{equation}
We now use that $P_{k m}(x;k)=P_m(T_k(x))$, $m\in\mathbb{N}_0$ \cite[Theorem 1, Section VI]{GvA88}, which yields
\begin{equation}\label{eq:rec2}
P_{k n}(x;k)=\sum_{j=0}^{\lfloor\frac{n}{2}\rfloor}r_n(j)T_{n-2j}(T_k(x))=\sum_{j=0}^{\lfloor\frac{n}{2}\rfloor}r_n(j)T_{k n-2j k}(x).
\end{equation}
Combining \eqref{eq:rec1a}, \eqref{eq:rec1b} and \eqref{eq:rec2} with the relation
$2x P_{k n-1}(x;k)=P_{k n}(x;k)+P_{k n-2}(x;k)$ and using the recurrence relation for the Chebyshev polynomials of the first kind, we obtain that $r_n(j)=p_{n-1}(j-1)+q_{n-1}(j)$ for each $j\in\{0,\ldots,\left\lfloor n/2\right\rfloor\}$. Since
\begin{equation*}
p_{n-1}(j-1)+q_{n-1}(j)=p_n(j)+q_n(j)
\end{equation*}
for each $j\in\{0,\ldots,\left\lfloor n/2\right\rfloor\}$ (which makes use of the recursions \eqref{eq:recnew1} and \eqref{eq:recnew2}, as well as another use of definition \eqref{eq:specialdefinition2}), we therefore get
\begin{equation}\label{eq:rec2mod}
P_{k n}(x;k)=\sum_{j=0}^{\lfloor\frac{n}{2}\rfloor}[p_n(j)+q_n(j)]T_{k n-2j k}(x).
\end{equation}
We now combine \eqref{eq:rec1b} with \eqref{eq:rec2mod}, write
\begin{equation*}
2x T_{k n-2j k}(x)=T_{kn-2j k+1}(x)+T_{|k n-2j k-1|}(x), \qquad j\in\left\{0,\ldots,\left\lfloor\frac{n}{2}\right\rfloor\right\},
\end{equation*}
use \eqref{eq:recnew1}, \eqref{eq:recnew2} and definition \eqref{eq:specialdefinition2} again and obtain
\begin{align*}
2a_n P_{k n+1}(x;k)&=2x P_{k n}(x;k)-2c_n P_{k n-1}(x;k)\\
&=\sum_{j=0}^{\lfloor\frac{n}{2}\rfloor}[p_n(j)+q_n(j)-2c_n p_{n-1}(j-1)]T_{k n-2j k+1}(x)\\
&\quad+\sum_{j=0}^{\lfloor\frac{n}{2}\rfloor}[p_n(j)+q_n(j)-2c_n q_{n-1}(j)]T_{|k n-2j k-1|}(x)\\
&=2a_n\sum_{j=0}^{\lfloor\frac{n}{2}\rfloor}[p_n(j)T_{k n-2j k-1}(x)+q_n(j)T_{k n-2j k+1}(x)].
\end{align*}
Thus if $k=2$, then \eqref{eq:mainsievedintheorem} is shown. If $k\geq3$, we use induction on $i$ to prove that
\begin{equation}\label{eq:conrec1ind}
P_{k n+i}(x;k)=\sum_{j=0}^{\lfloor\frac{n}{2}\rfloor}[p_n(j)T_{k n-2j k-i}(x)+q_n(j)T_{k n-2j k+i}(x)], \qquad i\in\{0,\ldots,k-1\}
\end{equation}
and have already shown the initial step $i\in\{0,1\}$; we hence assume $i\in\{0,\ldots,k-3\}$ to be arbitrary but fixed and \eqref{eq:conrec1ind} to hold for $i,i+1$, and then calculate
\begin{align*}
P_{k n+i+2}(x;k)&=2x P_{k n+i+1}(x;k)-P_{k n+i}(x;k)\\
&=\sum_{j=0}^{\lfloor\frac{n}{2}\rfloor}[p_n(j)T_{k n-2j k-i-2}(x)+q_n(j)T_{k n-2j k+i+2}(x)].
\end{align*}
This finishes the proof of \eqref{eq:mainsievedintheorem} for $k\geq2$, and we have simultaneously established \eqref{eq:rpqin} and~\eqref{eq:rpqout}. \eqref{eq:mainsievedintheorem} for the remaining case $k=1$ is an immediate consequence of \eqref{eq:rpqout}. Finally, \eqref{eq:psum} can be seen as follows: let $n\in\mathbb{N}_0$ and $j\in\{0,\ldots,\left\lfloor n/2\right\rfloor\}$. By \eqref{eq:rpqin} and \eqref{eq:rpqout}, we have
\begin{equation*}
r_n(i)-r_{n+1}(i)=p_n(i)-p_n(i-1)
\end{equation*}
for all $i\in\{0,\ldots,j\}$. Taking the sum from $0$ to $j$ and using definition \eqref{eq:specialdefinition1}, we get
\begin{equation*}
\sum_{i=0}^j[r_n(i)-r_{n+1}(i)]=\sum_{i=0}^j[p_n(i)-p_n(i-1)]=p_n(j)
\end{equation*}
as desired.
\end{proof}

We now apply Theorem~\ref{thm:mainsieved} to the ultraspherical polynomials and obtain explicit expansions of the sieved ultraspherical polynomials with respect to the Chebyshev basis $\{T_n(x)\colon n\in\mathbb{N}_0\}$:

\begin{Example}[sieved ultraspherical polynomials]\label{example:sievedultrasph}
Let $P_n(x)=P_n^{(\alpha)}(x)$, $n\in\mathbb{N}_0$, be the sequence of ultraspherical polynomials which corresponds to $\alpha>-1$. The case $\alpha=-1/2$ corresponds to the Chebyshev polynomials of the first kind $(T_n(x))_{n\in\mathbb{N}_0}$ and is therefore trivial, so let $\alpha\neq-1/2$ from now on. Then $(r_n)_{n\in\mathbb{N}_0}$ is given by \cite[Theorem 9.1.1]{Is09}
\begin{equation}\label{eq:rsievedultrasph}
r_n(j)=\begin{cases} \displaystyle\binom{n}{\frac{n}{2}}\frac{\left(\alpha+\frac{1}{2}\right)_{\frac{n}{2}}^2}{(2\alpha+1)_n}, & n\quad\text{even and}\quad j=\frac{n}{2}, \\ \displaystyle  2\binom{n}{j}\frac{\left(\alpha+\frac{1}{2}\right)_j\left(\alpha+\frac{1}{2}\right)_{n-j}}{(2\alpha+1)_n}, & \text{else}. \end{cases}
\end{equation}
Applying \eqref{eq:psum}, via induction on $j$ we see that the relation between $(p_n)_{n\in\mathbb{N}_0}$ and $(r_n)_{n\in\mathbb{N}_0}$ becomes especially easy and reads
\begin{equation}\label{eq:psievedultrasph}
p_n(j)=\begin{cases} 0, & n\quad \text{even and}\quad j=\frac{n}{2}, \\ \dfrac{2j+2\alpha+1}{2n+4\alpha+2}r_n(j), & \text{else}. \end{cases}
\end{equation}
Theorem~\ref{thm:mainsieved}, \eqref{eq:rsievedultrasph} and \eqref{eq:psievedultrasph} allow an explicit computation of the sieved ultraspherical polynomials \smash{$\big(P_n^{(\alpha)}(x;k)\big)_{n\in\mathbb{N}_0}$}.
\end{Example}

\section{Characterizations via the sieved operators}\label{sec:sieved}

Let $(P_n(x))_{n\in\mathbb{N}_0}$ be an RWPS as in Section~\ref{sec:intro}. Moreover, let $k\in\mathbb{N}$ again. Following \cite{IO11,Ka16a,LO08} ($q$- and non-sieved analogues), we consider the Fourier coefficients which are associated with $\mathrm{D}_k$ \eqref{eq:Dkdef} and $\mathrm{A}_k$ \eqref{eq:Akdef}: for each $n\in\mathbb{N}_0$, we define mappings $\kappa_n(\cdot;k),\alpha_n(\cdot;k)\colon \mathbb{N}_0\rightarrow\mathbb{R}$ by the projections
\begin{align}
\label{eq:kappandeffirst} \kappa_n(j;k)&:=\int_\mathbb{R}\mathrm{D}_k P_n(x)P_j(x)\mathrm{d}\mu(x),\\
\label{eq:alphandeffirst} \alpha_n(j;k)&:=\int_\mathbb{R}\mathrm{A}_k P_n(x)P_j(x)\mathrm{d}\mu(x).
\end{align}
In other words, $\kappa_n(\cdot;k)$ and $\alpha_n(\cdot ;k)$ correspond to the expansions
\begin{align}
\label{eq:kappandefsecond} \mathrm{D}_k P_n(x)&=\sum_{j=0}^{n-1}\kappa_n(j;k)P_j(x)h(j),\kappa_n(j;k)=0, \qquad j\geq n,\\
\label{eq:alphandefsecond} \mathrm{A}_k P_n(x)&=\sum_{j=0}^n\alpha_n(j;k)P_j(x)h(j),\alpha_n(j;k)=0, \qquad j\geq n+1.
\end{align}
Due to the symmetry of $(P_n(x))_{n\in\mathbb{N}_0}$, we have $\kappa_n(j;k)=0$ if $n-j$ is even, and we have ${\alpha_n(j;k)=0}$ if $n-j$ is odd. Furthermore, note that the functions $\kappa_0(\cdot;k),\ldots,\kappa_{n+1}(\cdot;k)$, $\alpha_0(\cdot;k),\allowbreak\ldots,\alpha_n(\cdot;k)$ and $\alpha_{n+1}(\cdot;k)|_{\{0,\ldots,n\}}$ are uniquely determined by the recurrence coefficients $c_1,\allowbreak\ldots,c_n$. For brevity, we define $\sigma(\cdot;k)\colon\mathbb{N}\rightarrow\mathbb{R}$,
\begin{equation}\label{eq:sigmadef}
\sigma(n;k):=\kappa_n(n-1;k).
\end{equation}
We come to two further main results and give characterizations in terms of the sieved operators~$\mathrm{A}_k$ and $\mathrm{D}_k$. Theorem~\ref{thm:mainsievedaveraging} is the sieved analogue to Theorem~\ref{thm:maincontqultrasph2}. Theorem~\ref{thm:mainsievedaskeywilson} is the sieved analogue to the Lasser--Obermaier result Theorem~\ref{thm:lasserobermaierref} and Ismail--Obermaier result Theorem~\ref{thm:ismailobermaiercontref}. As soon as $k\geq2$ (and also for $k=1$ in Theorem~\ref{thm:mainsievedaveraging}), we do not obtain characterizations of sieved ultraspherical polynomials (as one might expect due to comparison to the cited theorems) but characterizations of arbitrary $k$-sieved RWPS.

\begin{Theorem}\label{thm:mainsievedaveraging}
If $k\in\mathbb{N}$, then the following are equivalent:
\begin{enumerate}\itemsep=0pt
\item[$(i)$] $(P_n(x))_{n\in\mathbb{N}_0}$ is $k$-sieved,
\item[$(ii)$] for each $n\in\mathbb{N}_0$, $P_n(x)$ is an eigenvector of $\mathrm{A}_k$,
\item[$(iii)$] $\alpha_{n+1}(n-1;k)=0$, $n\in\mathbb{N}$.
\end{enumerate}
If these equivalent conditions are satisfied, then $P_n(|\cos(\pi/k)|)=T_n(|\cos(\pi/k)|)$ is the eigenvalue of $\mathrm{A}_k$ which corresponds to the eigenvector $P_n(x)$, $n\in\mathbb{N}_0$.
\end{Theorem}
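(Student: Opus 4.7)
The plan is to prove the cycle $(i)\Rightarrow(ii)\Rightarrow(iii)\Rightarrow(i)$, treating the last implication as the substantive one. Under $(i)$, Theorem~\ref{thm:mainsieved} expresses $P_{kn+i}(x)$ as a linear combination of the $T_{kn-2jk\pm i}(x)$. For $k\geq2$, since $T_m(|\cos(\pi/k)|)=\cos(m\pi/k)$, each index $m=kn-2jk\pm i$ produces the same value $(-1)^n\cos(i\pi/k)=T_{kn+i}(|\cos(\pi/k)|)$, so $P_{kn+i}$ is an eigenvector of $\mathrm{A}_k$ with this eigenvalue. Evaluating the expansion at $x=|\cos(\pi/k)|$ and using $P_{kn+i}(1)=1$ then shows this eigenvalue also equals $P_{kn+i}(|\cos(\pi/k)|)$; the case $k=1$ is trivial since $\mathrm{A}_1=\mathrm{id}$. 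For $(ii)\Rightarrow(iii)$: if $P_{n+1}$ is an eigenvector of $\mathrm{A}_k$, then $\mathrm{A}_k P_{n+1}$ is a scalar multiple of $P_{n+1}$ and hence orthogonal to $P_{n-1}$ under~$\mu$.

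For $(iii)\Rightarrow(i)$, I would first derive an explicit formula for $\alpha_{n+1}(n-1;k)$. Parity gives $\alpha_{n+1}(n;k)=0$, and matching leading coefficients gives $\alpha_{n+1}(n+1;k)h(n+1)=T_{n+1}(|\cos(\pi/k)|)$, so $\mathrm{A}_k P_{n+1}-T_{n+1}(|\cos(\pi/k)|)P_{n+1}$ is a polynomial of degree at most $n-1$. Computing its leading Chebyshev term from $P_{n+1}=\sum_j r_{n+1}(j)T_{n+1-2j}$, and its leading $x^{n-1}$-coefficient from the $(P_j)$-expansion $\sum_{j\leq n-1}\alpha_{n+1}(j;k)h(j)P_j$, and equating, yields
\[
\alpha_{n+1}(n-1;k)=\frac{r_{n+1}(1)\bigl[T_{n-1}(|\cos(\pi/k)|)-T_{n+1}(|\cos(\pi/k)|)\bigr]}{h(n-1)\,r_{n-1}(0)}.
\]
For $k\geq2$ the bracket equals $2\sin(\pi/k)\sin(n\pi/k)$, which vanishes iff $k\mid n$. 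Hence $(iii)$ is equivalent to $r_{n+1}(1)=0$ for every $n$ with $k\nmid n$.

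The remaining task, and the main obstacle, is to show that these vanishings force $c_n=1/2$ whenever $k\nmid n$. I would induct on $n$. The base $n=1$ follows from the three-term recurrence, which gives $r_2(1)=(1-2c_1)/(2a_1)$. For the step, assuming $c_l=1/2$ for every $l<n$ with $k\nmid l$, the sequence $(P_l)_{l\leq n}$ satisfies the recurrence of some $k$-sieved RWPS, so Theorem~\ref{thm:mainsieved} applies to both $P_n$ and $P_{n-1}$. Writing $n=km+i$ with $1\leq i\leq k-1$, the expansion~\eqref{eq:mainsievedintheorem} gives $r_n(0)=q_m(0)$, $r_n(1)=0$, and $r_{n-1}(0)=q_m(0)$ when $i\geq2$, and $r_n(0)=q_m(0)$, $r_n(1)=p_m(0)$, and $r_{n-1}(0)=p_m(0)+q_m(0)$ when $i=1$; in either case $r_n(0)+r_n(1)=r_{n-1}(0)$. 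Substituting into the Chebyshev-recurrence identity $a_n r_{n+1}(1)=(r_n(0)+r_n(1))/2-c_n r_{n-1}(0)$ and invoking $r_{n+1}(1)=0$ forces $c_n=1/2$, closing the induction. The crucial point is that the partial sieving assumption up to index $n-1$ already suffices to apply Theorem~\ref{thm:mainsieved} at step~$n$, even though $c_n$ itself is still unknown.
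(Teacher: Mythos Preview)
Your argument is correct, and the implications $(i)\Rightarrow(ii)\Rightarrow(iii)$ match the paper's proof essentially verbatim. Your derivation of the eigenvalue identity $P_n(|\cos(\pi/k)|)=T_n(|\cos(\pi/k)|)$ by evaluating the Chebyshev expansion at $|\cos(\pi/k)|$ and using $P_n(1)=1$ is actually slicker than the paper's, which instead verifies that the two sequences satisfy the same recurrence at the point $\cos(\pi/k)$.

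For $(iii)\Rightarrow(i)$ you take a genuinely different route. The paper invokes Lemma~\ref{lma:last}(ii), which rewrites $\alpha_{n+1}(n-1;k)$ as a multiple of $n+1-4\sum_{j=1}^{n}a_{j-1}c_j$; condition $(iii)$ then becomes the arithmetic identity $4\sum_{j=1}^{n}a_{j-1}c_j=n+1$ for $k\nmid n$, and the induction proceeds by manipulating these sums directly. You stay closer to the Chebyshev expansion: you reduce $(iii)$ to $r_{n+1}(1)=0$ for $k\nmid n$, and in the induction step you feed the partial $k$-sieving hypothesis back into Theorem~\ref{thm:mainsieved} to read off $r_n(0)$, $r_n(1)$, $r_{n-1}(0)$ explicitly. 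This has the pleasant feature of making Theorem~\ref{thm:mainsieved} do double duty (for both directions of the equivalence), whereas the paper's route is more self-contained at the cost of the auxiliary sum identity.

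One small point of care: your ``Chebyshev-recurrence identity'' $a_n r_{n+1}(1)=(r_n(0)+r_n(1))/2-c_n r_{n-1}(0)$ is obtained by matching the coefficient of $T_{n-1}$ in $xP_n=a_nP_{n+1}+c_nP_{n-1}$, and this is literally correct only for $n\geq3$; at $n=2$ the term $xT_0=T_1$ contributes an extra $r_2(1)/2$. In your induction this causes no trouble, since $n=2$ arises only for $k\geq3$, where the base case already gives $c_1=1/2$ and hence $r_2(1)=0$. It would be worth stating the identity for $n\geq3$ and handling $n=2$ by the same direct computation you used for $n=1$.
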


\begin{Theorem}\label{thm:mainsievedaskeywilson}
If $k\geq2$, then the following are equivalent:
\begin{enumerate}\itemsep=0pt
\item[$(i)$] $(P_n(x))_{n\in\mathbb{N}_0}$ is $k$-sieved,
\item[$(ii)$] $\mathrm{D}_k P_n(x)=\mathrm{D}_k P_n(1)P_{n-1}^{\ast}(x)$, $n\in\mathbb{N}$,
\item[$(iii)$] $\big(1-x^2\big)\mathrm{D}_k P_n(x)$ is orthogonal to $P_0(x),\ldots,P_{n-2}(x)$, $n\geq2$,
\item[$(iv)$] one has
\begin{equation*}
\kappa_{n+2}(n-1;k)=\sigma(n+2;k), \qquad n\in\mathbb{N},
\end{equation*}
and for every $n\in\mathbb{N}$ there is an $m\in\{0,\ldots,\left\lfloor(n-1)/2\right\rfloor\}$ such that
\begin{equation*}
\kappa_{n+4}(n-1-2m;k)=\sigma(n+4;k).
\end{equation*}
\end{enumerate}
If $k=1$, then $(ii)$, $(iii)$ and $(iv)$ are equivalent to
\begin{enumerate}\itemsep=0pt
\item[$(i')$] $P_n(x)=P_n^{(1/(2c_1)-3/2)}(x)$, $n\in\mathbb{N}_0$.
\end{enumerate}
\end{Theorem}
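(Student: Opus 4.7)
\medskip

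My plan is to prove the equivalences in the order $(ii)\Leftrightarrow(iii)$, $(ii)\Leftrightarrow(iv)$, $(i)\Rightarrow(ii)$, and finally $(iv)\Rightarrow(i)$ for $k\geq 2$; the $k=1$ case will then follow by invoking Theorem~\ref{thm:lasserobermaierref}.

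\textbf{Equivalence of $(ii)$ and $(iii)$.} Since $\mathrm{D}_k P_n(x)$ has degree at most $n-1$ and parity $n-1\pmod 2$, the polynomial $\bigl(1-x^2\bigr)\mathrm{D}_k P_n(x)$ has degree at most $n+1$ and the same parity. By symmetry of~$\mu$ it is already orthogonal to every $P_j$ of opposite parity, so condition $(iii)$ forces it to be a linear combination of $P_{n-1}$ and $P_{n+1}$ only. Evaluating at $x=1$, where the left-hand side vanishes and $P_{n\pm 1}(1)=1$, makes those two coefficients negatives of one another, so $\bigl(1-x^2\bigr)\mathrm{D}_k P_n(x)\propto P_{n+1}(x)-P_{n-1}(x)$. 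Comparing with the first equality in~\eqref{eq:kernelstarprior} yields $(ii)$ after normalizing at $x=1$. The reverse direction is immediate from the defining orthogonality of~$P_{n-1}^{\ast}$.

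\textbf{Equivalence of $(ii)$ and $(iv)$.} The expansion~\eqref{eq:kernelstarprior} of $P_{n-1}^{\ast}$ as a convex combination of $P_{n-1},P_{n-3},\ldots$ with weights $h(\cdot)$ shows that $(ii)$ is equivalent to the statement that $\kappa_n(j;k)=\sigma(n;k)$ for every $j\in\{0,\ldots,n-1\}$ with $j\equiv n-1\pmod 2$. Therefore $(ii)\Rightarrow(iv)$ is automatic, applied with $n\to n+2$ for the first equality and $n\to n+4$ for the second. The converse implication $(iv)\Rightarrow(ii)$ is subtler; I plan to reduce it to $(i)$ (proved below) and then invoke $(i)\Rightarrow(ii)$.

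\textbf{Direction $(i)\Rightarrow(ii)$ for $k\geq 2$.} Suppose $(P_n(x))_{n\in\mathbb{N}_0}$ is $k$-sieved with underlying sequence furnishing connection coefficients $r_n$, hence $p_n$, $q_n$ via Theorem~\ref{thm:mainsieved}. For $n=kn'+i$ with $i\in\{1,\ldots,k-1\}$, apply~$\mathrm{D}_k$ term-by-term to~\eqref{eq:mainsievedintheorem}: using $\mathrm{D}_k T_m(x)=\sin(m\pi/k)/\sin(\pi/k)\cdot U_{m-1}(x)$ and the identity $\sin\bigl((k(n'-2j)\pm i)\pi/k\bigr)=\pm(-1)^{n'}\sin(i\pi/k)$, a common scalar $\pm(-1)^{n'}\sin(i\pi/k)/\sin(\pi/k)$ factors out and one obtains
\begin{equation*}
\mathrm{D}_k P_{kn'+i}(x)=(-1)^{n'}\frac{\sin(i\pi/k)}{\sin(\pi/k)}\sum_{j=0}^{\lfloor n'/2\rfloor}\bigl[-p_{n'}(j)U_{k(n'-2j)-(i+1)}(x)+q_{n'}(j)U_{k(n'-2j)+(i-1)}(x)\bigr].
\end{equation*}
On the other hand, the elementary identity $T_m(x)-T_{m-2}(x)=-2\bigl(1-x^2\bigr)U_{m-2}(x)$ applied to the Theorem~\ref{thm:mainsieved} expansions of $P_{kn'+i+1}(x)$ and $P_{kn'+i-1}(x)$ produces exactly the same $U$-expansion up to an explicit nonzero scalar for $\bigl(P_{kn'+i+1}(x)-P_{kn'+i-1}(x)\bigr)\big/\bigl(1-x^2\bigr)$; by~\eqref{eq:kernelstarprior} this equals a multiple of $P_{kn'+i-1}^{\ast}(x)$, yielding $(ii)$. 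For the boundary value $i=k-1$ I would reindex so that $P_{k(n'+1)}=P_{kn'+k}$ is rewritten via~\eqref{eq:mainsievedintheorem} with $n'+1$ and $i=0$; the identities $p_{n'+1}(j)+q_{n'+1}(j)=p_{n'}(j-1)+q_{n'}(j)$ arising from the proof of Theorem~\ref{thm:mainsieved} make the two sides reconcile. For $i=0$, $\mathrm{D}_k P_{kn'}(x)=0$ because every $U_{k(n'-2j)-1}(|\cos(\pi/k)|)=0$, and $\mathrm{D}_k P_{kn'}(1)=0$ as well, so $(ii)$ holds trivially.

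\textbf{Direction $(iv)\Rightarrow(i)$ for $k\geq 2$ --- the main obstacle.} Here I would combine the product rule
\begin{equation*}
\mathrm{D}_k(xP_n)(x)=\cos(\pi/k)\cdot x\,\mathrm{D}_k P_n(x)+\mathrm{A}_k P_n(x),
\end{equation*}
obtained as the $s\to 1$ limit of the Askey--Wilson product rule, with the three-term recurrence for $(P_n)$ to derive a linear recursion linking $\kappa_{n+1}(j;k)$, $\kappa_{n-1}(j;k)$, $\kappa_n(j\pm 1;k)$ and $\alpha_n(j;k)$. Plugging in the first equality of $(iv)$ gives $c_n=1/2$ for a first family of indices $n$ with $k\nmid n$; the second equality, quantified over~$m$, then propagates the conclusion to all remaining $n$ with $k\nmid n$. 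The crucial point --- and the reason $(iv)$ cannot be simplified to the single equality $\kappa_{n+2}(n-1;k)=\sigma(n+2;k)$ --- is that $\sigma(n;k)=0$ whenever $k\mid n$, reflecting the infinite-dimensional kernel of $\mathrm{D}_k$; thus any equality of Fourier coefficients at such indices is trivially automatic and carries no information about the corresponding $c_{n/k}$, which explains the additional degree of freedom and the need for the quantifier~$m$ in $(iv)$. I expect to trace through these cases by induction on $n$, establishing $c_n=1/2$ for $k\nmid n$ while leaving $c_{n/k}$ free when $k\mid n$. Having established $(i)$, the implication $(iv)\Rightarrow(ii)$ is closed by $(i)\Rightarrow(ii)$.

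\textbf{The case $k=1$.} Then ``$1$-sieved'' is vacuous, so $(i)$ carries no content; $(ii)$ reads $P_n^\prime(x)=P_n^\prime(1)P_{n-1}^{\ast}(x)$ and is equivalent to $(i')$ by Theorem~\ref{thm:lasserobermaierref}. The equivalences $(ii)\Leftrightarrow(iii)\Leftrightarrow(iv)$ go through exactly as above, independent of $k$.
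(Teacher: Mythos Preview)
Your argument for $(i)\Rightarrow(ii)$ via the explicit Chebyshev expansion of Theorem~\ref{thm:mainsieved} is genuinely different from the paper's route, which instead uses the product rule (Lemma~\ref{lma:productrule}) together with Theorem~\ref{thm:mainsievedaveraging} to show inductively that $\kappa_n(n-1-2j;k)$ is independent of~$j$. Your direct computation is more concrete and avoids the detour through the averaging operator; the paper's approach has the advantage that the same machinery is reused for the converse. Your boundary cases $i=0$ and $i=k-1$ are correct in outline, though the $i=k-1$ reindexing deserves to be written out.

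The real gap is in $(iv)\Rightarrow(i)$, which is the heart of the theorem and which you only sketch. Two substantial ingredients are missing. First, the initial step --- establishing $c_1=1/2$, and $c_2=1/2$ when $k\geq3$ --- is not a matter of ``plugging in'': the paper derives a nonlinear system in $c_1,c_2,c_3$ from the conditions $\kappa_3(0;k)=\sigma(3;k)$, $\kappa_4(1;k)=\sigma(4;k)$, $\kappa_5(0;k)=\kappa_5(2;k)=\sigma(5;k)$, and resolving it requires a nontrivial polynomial factorization and case analysis. Second, the inductive step from the first equality of $(iv)$ hinges on knowing $\alpha_{n+1}(n-1;k)=0$ under the induction hypothesis; this is obtained by comparing $(P_j)$ with an auxiliary $k$-sieved sequence sharing $c_1,\ldots,c_n$ and invoking Theorem~\ref{thm:mainsievedaveraging} --- an idea you do not mention. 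Most seriously, for $k=2$ one has $\cos(\pi/k)=0$, so the product-rule recursion you intend to use collapses to $0=0$ at exactly the indices you need; the paper handles this with a separate argument involving $\mathrm{A}_2$ and the second equality of $(iv)$, and it is here --- not merely because $\sigma(km;k)=0$ --- that the quantifier over $m$ becomes indispensable.

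Finally, for $k=1$ your claim that $(ii)\Leftrightarrow(iv)$ ``goes through exactly as above'' is not correct: your $(iv)\Rightarrow(ii)$ was routed through $(i)\Rightarrow(ii)$, but for $k=1$ every RWPS is $1$-sieved while $(ii)$ singles out the ultraspherical polynomials, so $(i)\Rightarrow(ii)$ fails. The paper defers this case to the proof in~\cite{LO08}.
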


The characterization provided by (iii) of the previous theorem has the advantage that it is ``stable'' with respect to renormalization of the sequence $(P_n(x))_{n\in\mathbb{N}_0}$. The characterization provided by (iv) is the strongest one, however, because the functions $\kappa_n(\cdot;k)$ \eqref{eq:kappandeffirst} \eqref{eq:kappandefsecond} have to be considered just at some carefully chosen points.

Note that the formal limits ``$\mathrm{D}_\infty$'' and ``$\mathrm{A}_\infty$'' are included in our investigations because they coincide with $\mathrm{D}_1=\mathrm{d}/\mathrm{d}x$ and $\mathrm{A}_1=\id$.

Before coming to the proofs, we study some basic properties of $\mathrm{D}_k$ and $\mathrm{A}_k$. We will make use of the following well-known identities \cite[formulas~(22.7.25)--(22.7.28)]{AS65}:
\begin{align}
\label{eq:Chebrecurrencesim} \big(2x^2-2\big)U_{n-1}(x)&=T_{n+1}(x)-T_{n-1}(x), \qquad n\in\mathbb{N},\\
\label{eq:mixedsecond} 2T_m(x)U_{n-1}(x)&=U_{m+n-1}(x)+U_{n-m-1}(x), \qquad m,n\in\mathbb{N}_0,\quad m\leq n,\\
\label{eq:mixedthird} 2T_m(x)U_{n-1}(x)&=U_{m+n-1}(x)-U_{m-n-1}(x), \qquad m,n\in\mathbb{N}_0,\quad m\geq n.
\end{align}
The following lemma deals with the function $\sigma(.;k)$ \eqref{eq:sigmadef} and with special values of the functions $\alpha_n(.;k)$ \eqref{eq:alphandeffirst} \eqref{eq:alphandefsecond}. The analogous $q$- and non-sieved versions can be found in \cite{IO11,LO08} with similar proofs.

\begin{Lemma}\label{lma:last}
One has
\begin{enumerate}\itemsep=0pt
\item[$(i)$] $\alpha_n(n;k)=T_n(|\cos(\pi/k)|)/h(n)$, $n\in\mathbb{N}_0$,
\item[$(ii)$] the equation
\begin{equation*}
\alpha_n(n-2;k)=\frac{U_{n-2}\left(\left|\cos\left(\frac{\pi}{k}\right)\right|\right)\sin^2\left(\frac{\pi}{k}\right)\big(n-4\sum_{j=1}^{n-1}a_{j-1}c_j\big)}{2c_{n-1}c_n h(n)}
\end{equation*}
holds for all $n\geq2$,
\item[$(iii)$] $\sigma(n;k)=U_{n-1}(|\cos(\pi/k)|)/(c_n h(n))$, $n\in\mathbb{N}$.
\end{enumerate}
\end{Lemma}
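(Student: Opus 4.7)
The plan is to prove all three parts by equating coefficients of suitable powers of $x$ in two natural expansions of $\mathrm{A}_k P_n(x)$ and $\mathrm{D}_k P_n(x)$: applying the operators termwise to the Chebyshev expansion~\eqref{eq:connectionexpansion} gives
\[
\mathrm{A}_k P_n(x)=\sum_{j=0}^{\lfloor n/2\rfloor} r_n(j)T_{n-2j}(|\cos(\pi/k)|)T_{n-2j}(x),
\]
\[
\mathrm{D}_k P_n(x)=\sum_{j=0}^{\lfloor n/2\rfloor} r_n(j)U_{n-2j-1}(|\cos(\pi/k)|)U_{n-2j-1}(x),
\]
while \eqref{eq:alphandefsecond} and \eqref{eq:kappandefsecond} are the corresponding $P$-basis expansions. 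For~(i), equating leading coefficients of $x^n$ immediately yields $\alpha_n(n;k)h(n)=T_n(|\cos(\pi/k)|)$, since the common leading coefficient $r_n(0)2^{n-1}$ of $P_n$ factors out on both sides. For~(iii), equating leading coefficients of $x^{n-1}$ (noting that $U_{n-1}$ has leading coefficient $2^{n-1}$, whereas $P_{n-1}$ has leading coefficient $r_{n-1}(0)2^{n-2}$) gives $\sigma(n;k)h(n-1)r_{n-1}(0)=2r_n(0)U_{n-1}(|\cos(\pi/k)|)$, and the elementary identities $r_n(0)/r_{n-1}(0)=1/(2a_{n-1})$ and $a_{n-1}h(n-1)=c_nh(n)$ (both read off from the $P_n$-recurrence and the definition of $h$) deliver the claim.

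For~(ii), I would work with the auxiliary polynomial $G_n(x):=\mathrm{A}_k P_n(x)-T_n(|\cos(\pi/k)|)P_n(x)$. By~(i), the $P_n$-term in the $P$-basis expansion of $\mathrm{A}_k P_n$ cancels, so $G_n(x)=\alpha_n(n-2;k)h(n-2)P_{n-2}(x)$ plus $P$-terms of degree at most $n-4$; on the Chebyshev side the $T_n$-contribution likewise cancels, giving $G_n(x)=r_n(1)[T_{n-2}(|\cos(\pi/k)|)-T_n(|\cos(\pi/k)|)]T_{n-2}(x)$ plus Chebyshev terms of lower degree. Matching coefficients of $T_{n-2}(x)$ yields
\[
\alpha_n(n-2;k)h(n-2)r_{n-2}(0)=r_n(1)\bigl[T_{n-2}(|\cos(\pi/k)|)-T_n(|\cos(\pi/k)|)\bigr],
\]
and the sum-to-product formula $T_{n-2}(|\cos(\pi/k)|)-T_n(|\cos(\pi/k)|)=2\sin^2(\pi/k)U_{n-2}(|\cos(\pi/k)|)$ (immediate for $k\ge 2$ and trivially valid for $k=1$, where both sides vanish) reduces~(ii) to an explicit identity for $r_n(1)$.

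The remaining task is the closed form $r_n(1)=(n-4\sum_{j=1}^{n-1}a_{j-1}c_j)r_{n-2}(0)/(4a_{n-2}a_{n-1})$ for $n\ge 2$. For $n\ge 3$ I would prove the equivalent statement $r_n(1)/r_n(0)=n-4\sum_{j=1}^{n-1}a_{j-1}c_j$ (using $r_{n-2}(0)/r_n(0)=4a_{n-2}a_{n-1}$) by induction: matching coefficients of $T_{n-1}(x)$ in $xP_n(x)=a_nP_{n+1}(x)+c_nP_{n-1}(x)$ and dividing through by $r_{n+1}(0)=r_n(0)/(2a_n)$ yields the first-order recursion $r_{n+1}(1)/r_{n+1}(0)=1+r_n(1)/r_n(0)-4a_{n-1}c_n$, which telescopes from the explicit base case $r_3(1)/r_3(0)=3-4c_1-4a_1c_2$ (obtained from $P_3(x)=x(x^2-c_1-a_1c_2)/(a_1a_2)$). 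The case $n=2$ is verified directly from $P_2(x)=(x^2-c_1)/a_1$; this index is special because $xT_0(x)=T_1(x)$ lacks the factor $1/2$ present in $xT_m=(T_{m+1}+T_{m-1})/2$ for $m\ge 1$, which shifts the base of the recursion but not the resulting closed form. Substituting back into the displayed identity and simplifying via $a_{n-2}a_{n-1}h(n-2)=c_{n-1}c_nh(n)$ produces exactly the expression asserted in~(ii). The main obstacle is this careful bookkeeping---each individual step is elementary, but the interplay between the $r_n$-recursion, the $h$-identities, and the boundary behaviour at small $n$ requires attention.
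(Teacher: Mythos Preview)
Your approach is essentially the paper's: both expand $P_n$ in the Chebyshev basis, apply $\mathrm{A}_k$ or $\mathrm{D}_k$ termwise, and compare with the $P$-basis expansion at the top one or two coefficients, reducing (ii) to the identity $4a_{n-2}a_{n-1}r_n(1)=\bigl[n-4\sum_{j=1}^{n-1}a_{j-1}c_j\bigr]r_{n-2}(0)$ and then using $a_{n-2}a_{n-1}h(n-2)=c_{n-1}c_nh(n)$. The only substantive difference is that the paper cites \cite[proof of Lemma~5.1]{IO11} for this $r_n(1)$ identity, whereas you supply a self-contained induction via the recursion $r_{n+1}(1)/r_{n+1}(0)=1+r_n(1)/r_n(0)-4a_{n-1}c_n$; your derivation is correct for $n\geq3$ and your separate treatment of $n=2$ is the right way to handle the anomalous $xT_0=T_1$.

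One small boundary issue to tidy in (iii): your formula ``$P_{n-1}$ has leading coefficient $r_{n-1}(0)2^{n-2}$'' and the identity $r_n(0)/r_{n-1}(0)=1/(2a_{n-1})$ both fail at $n=1$ (since $T_0$ has leading coefficient $1$, not $2^{-1}$), so your displayed equation $\sigma(n;k)h(n-1)r_{n-1}(0)=2r_n(0)U_{n-1}(|\cos(\pi/k)|)$ is only valid for $n\geq2$. The paper handles this with a Kronecker $\delta_{n,1}$; you can simply verify $n=1$ directly ($\mathrm{D}_kP_1=1$ and $c_1h(1)=1$), consistent with your own closing remark about boundary behaviour.
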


\begin{proof}
(i) If one expands $P_n(x)$ as in \eqref{eq:connectionexpansion}, this is obvious from the definitions (in particular, use \eqref{eq:Akdef}).

(ii) Using \eqref{eq:Akdef}, \eqref{eq:connectionexpansion} and (i), we have
\begin{align*}
&\sum_{j=1}^{\lfloor\frac{n}{2}\rfloor}r_n(j)T_{n-2j}\left(\left|\cos\left(\frac{\pi}{k}\right)\right|\right)T_{n-2j}(x)\\
&\quad=\mathrm{A}_k P_n(x)-r_n(0)T_n\left(\left|\cos\left(\frac{\pi}{k}\right)\right|\right)T_n(x)\\
&\quad=\left[T_n\left(\left|\cos\left(\frac{\pi}{k}\right)\right|\right)r_n(1)+\alpha_n(n-2;k)h(n-2)r_{n-2}(0)\right]T_{n-2}(x)+R(x)
\end{align*}
for some $R(x)\in\mathbb{R}[x]$ with $\deg R(x)\leq n-3$; thus a comparison of the coefficients of~$T_{n-2}(x)$ and \eqref{eq:Chebrecurrencesim} yield
\begin{align*}
\alpha_n(n-2;k)h(n-2)r_{n-2}(0)&=\left[T_{n-2}\left(\left|\cos\left(\frac{\pi}{k}\right)\right|\right)-T_n\left(\left|\cos\left(\frac{\pi}{k}\right)\right|\right)\right]r_n(1)\\
&=2U_{n-2}\left(\left|\cos\left(\frac{\pi}{k}\right)\right|\right)\sin^2\left(\frac{\pi}{k}\right)r_n(1).
\end{align*}
Finally, since
\begin{equation*}
4a_{n-2}a_{n-1}r_n(1)=\Bigg[n-4\sum_{j=1}^{n-1}a_{j-1}c_j\Bigg]r_{n-2}(0)
\end{equation*}
(see the proof of \cite[Lemma 5.1]{IO11}) and $a_{n-2}a_{n-1}h(n-2)=c_{n-1}c_n h(n)$, we obtain the assertion.

(iii) While on the one hand one has
\begin{equation*}
\mathrm{D}_k P_n(x)=\sum_{j=0}^{\lfloor\frac{n}{2}\rfloor}r_n(j)U_{n-2j-1}\left(\left|\cos\left(\frac{\pi}{k}\right)\right|\right)U_{n-2j-1}(x)
\end{equation*}
by \eqref{eq:Dkdef}, on the other hand one has
\begin{align*}
\mathrm{D}_k P_n(x)&=\kappa_n(n-1;k)h(n-1)r_{n-1}(0)T_{n-1}(x)+R(x)\\
&=\frac{\kappa_n(n-1;k)h(n-1)r_{n-1}(0)}{2-\delta_{n,1}}U_{n-1}(x)+S(x)
\end{align*}
with polynomials $R(x),S(x)\in\mathbb{R}[x]$ with $\deg R(x),\deg S(x)\leq n-2$. Consequently, we have
\begin{equation*}
r_n(0)U_{n-1}\left(\left|\cos\left(\frac{\pi}{k}\right)\right|\right)=\frac{\kappa_n(n-1;k)h(n-1)r_{n-1}(0)}{2-\delta_{n,1}},
\end{equation*}
and as obviously $r_{n-1}(0)=(2-\delta_{n,1})a_{n-1}r_n(0)$ and $a_{n-1}h(n-1)=c_n h(n)$, the proof is complete (note that, by definition, $\kappa_n(n-1;k)=\sigma(n;k)$).
\end{proof}

We also investigate the product rule for the sieved Askey--Wilson operator $\mathrm{D}_k$. Its analogue for $\mathcal{D}_q$ has the same structure (see \cite{Is09,IO11}).

\begin{Lemma}\label{lma:productrule}
One has
\begin{equation*}
\mathrm{D}_k[P(x)Q(x)]=\mathrm{D}_k P(x)\mathrm{A}_k Q(x)+\mathrm{A}_k P(x)\mathrm{D}_k Q(x), \qquad P(x),Q(x)\in\mathbb{R}[x].
\end{equation*}
Consequently,
\begin{align}\label{eq:recurrencekappasieved}
&a_n\kappa_{n+1}(j;k)+c_n\kappa_{n-1}(j;k)\nonumber\\
&\quad=\left|\cos\left(\frac{\pi}{k}\right)\right|[a_j\kappa_n(j+1;k)+c_j\kappa_n(j-1;k)]+\alpha_n(j;k), \qquad n,j\in\mathbb{N}_0.
\end{align}
\end{Lemma}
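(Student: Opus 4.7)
The product rule is bilinear in $(P,Q)$, so it suffices to verify it on pairs of basis elements $P(x)=T_m(x)$, $Q(x)=T_n(x)$, and by symmetry I may assume $m\leq n$. Writing $c:=|\cos(\pi/k)|$, the left-hand side is easy via $2T_mT_n=T_{m+n}+T_{n-m}$ and the definition \eqref{eq:Dkdef}:
\begin{equation*}
\mathrm{D}_k[T_m(x)T_n(x)]=\tfrac{1}{2}\bigl[U_{m+n-1}(c)U_{m+n-1}(x)+U_{n-m-1}(c)U_{n-m-1}(x)\bigr].
\end{equation*}
For the right-hand side, \eqref{eq:Dkdef} and \eqref{eq:Akdef} give
$\mathrm{D}_k T_m\mathrm{A}_k T_n+\mathrm{A}_k T_m\mathrm{D}_k T_n=U_{m-1}(c)T_n(c)\,U_{m-1}(x)T_n(x)+T_m(c)U_{n-1}(c)\,T_m(x)U_{n-1}(x)$.
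The crux is to decouple the mixed products. Apply \eqref{eq:mixedsecond} (valid since $m\leq n$) and \eqref{eq:mixedthird} (valid since $n\geq m$) both in the variable $x$ \emph{and} in the constant $c$, so that
$2T_m(y)U_{n-1}(y)=U_{m+n-1}(y)+U_{n-m-1}(y)$ and $2T_n(y)U_{m-1}(y)=U_{m+n-1}(y)-U_{n-m-1}(y)$ for $y\in\{c,x\}$. Multiplying out the two products of differences / sums and adding, the cross terms cancel and I obtain exactly twice the left-hand side. This proves the product rule.

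For the recurrence \eqref{eq:recurrencekappasieved}, specialize the product rule to $P(x)=x=T_1(x)$ and $Q(x)=P_n(x)$. Because $\mathrm{D}_k T_1(x)=U_0(c)U_0(x)=1$ and $\mathrm{A}_k T_1(x)=T_1(c)T_1(x)=cx$, this yields
\begin{equation*}
\mathrm{D}_k[xP_n(x)]=\mathrm{A}_k P_n(x)+cx\,\mathrm{D}_k P_n(x).
\end{equation*}
On the other hand, expanding $xP_n(x)=a_nP_{n+1}(x)+c_nP_{n-1}(x)$ via \eqref{eq:recfund} and applying $\mathrm{D}_k$ gives $\mathrm{D}_k[xP_n(x)]=a_n\mathrm{D}_k P_{n+1}(x)+c_n\mathrm{D}_k P_{n-1}(x)$. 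Equating the two expressions, integrating against $P_j(x)\,\mathrm{d}\mu(x)$, and recognizing the various Fourier coefficients via \eqref{eq:kappandeffirst}, \eqref{eq:alphandeffirst} produces three of the four required terms immediately. For the remaining term I move $x$ across the inner product: $\int xP_j(x)\mathrm{D}_k P_n(x)\,\mathrm{d}\mu(x)$ equals $a_j\kappa_n(j+1;k)+c_j\kappa_n(j-1;k)$ by another use of \eqref{eq:recfund} applied to $P_j(x)$.

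The main obstacle is purely bookkeeping in the product-rule step: one must be careful that the two applications of \eqref{eq:mixedsecond}/\eqref{eq:mixedthird} use the correct sign convention (because the pair $(m,n)$ has definite order but the roles of $T$ and $U$ are swapped between the two summands). Once the signs are tracked correctly, the cancellation of the cross terms $U_{m+n-1}(c)U_{n-m-1}(x)$ and $U_{n-m-1}(c)U_{m+n-1}(x)$ is automatic and the identity closes. The derivation of \eqref{eq:recurrencekappasieved} is then a routine projection argument with no hidden difficulty.
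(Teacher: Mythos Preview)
Your proof is correct and follows essentially the same route as the paper's own argument: reduce by bilinearity to $P=T_m$, $Q=T_n$ with $m\leq n$, apply the identities \eqref{eq:mixedsecond} and \eqref{eq:mixedthird} simultaneously in $x$ and in $c=|\cos(\pi/k)|$ to expand the mixed products, and observe that the cross terms cancel; the paper carries out exactly this computation in the reverse order (starting from the right-hand side and arriving at $\mathrm{D}_k[T_mT_n]$ via $T_mT_n=\tfrac12(T_{m+n}+T_{n-m})$). Your derivation of \eqref{eq:recurrencekappasieved} by specializing to $P(x)=x$, using \eqref{eq:recfund} on both $P_n$ and $P_j$, and integrating against $P_j\,\mathrm{d}\mu$ is also exactly the paper's argument.
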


\begin{proof}
Due to linearity, it clearly suffices to establish that
\begin{equation*}
\mathrm{D}_k[T_m(x)T_n(x)]=\mathrm{D}_k T_m(x)\mathrm{A}_k T_n(x)+\mathrm{A}_k T_m(x)\mathrm{D}_k T_n(x), \qquad m,n\in\mathbb{N}_0.
\end{equation*}
This, however, can easily be seen from the equations \eqref{eq:Dkdef}, \eqref{eq:Akdef}, \eqref{eq:mixedsecond} and \eqref{eq:mixedthird} by the computation
\begin{align*}
&\mathrm{D}_k T_m(x)\mathrm{A}_k T_n(x)+\mathrm{A}_k T_m(x)\mathrm{D}_k T_n(x)\\
&\quad=T_n\left(\left|\cos\left(\frac{\pi}{k}\right)\right|\right)U_{m-1}\left(\left|\cos\left(\frac{\pi}{k}\right)\right|\right)T_n(x)U_{m-1}(x)\\
&\quad\phantom{=}{}+T_m\left(\left|\cos\left(\frac{\pi}{k}\right)\right|\right)U_{n-1}\left(\left|\cos\left(\frac{\pi}{k}\right)\right|\right)T_m(x)U_{n-1}(x)\\
&\quad=\frac{1}{4}\left[U_{m+n-1}\left(\left|\cos\left(\frac{\pi}{k}\right)\right|\right)-U_{n-m-1}\left(\left|\cos\left(\frac{\pi}{k}\right)\right|\right)\right][U_{m+n-1}(x)-U_{n-m-1}(x)]\\
&\quad\phantom{=}{}+\frac{1}{4}\left[U_{m+n-1}\left(\left|\cos\left(\frac{\pi}{k}\right)\right|\right)+U_{n-m-1}\left(\left|\cos\left(\frac{\pi}{k}\right)\right|\right)\right][U_{m+n-1}(x)+U_{n-m-1}(x)]\\
&\quad=\frac{1}{2}U_{m+n-1}\left(\left|\cos\left(\frac{\pi}{k}\right)\right|\right)U_{m+n-1}(x)+\frac{1}{2}U_{n-m-1}\left(\left|\cos\left(\frac{\pi}{k}\right)\right|\right)U_{n-m-1}(x)\\
&\quad=\frac{1}{2}\mathrm{D}_k T_{m+n}(x)+\frac{1}{2}\mathrm{D}_k T_{n-m}(x)=\mathrm{D}_k\left[\frac{1}{2}T_{m+n}(x)+\frac{1}{2}T_{n-m}(x)\right]\\
&\quad=\mathrm{D}_k[T_m(x)T_n(x)]
\end{align*}
for $m\leq n$ (the expansion $T_m(x)T_n(x)=T_{m+n}(x)/2+T_{n-m}(x)/2$ is well known). Now let $n,j\in\mathbb{N}_0$. Via \eqref{eq:recfund}, we compute
\begin{align*}
a_n\mathrm{D}_k P_{n+1}(x)+c_n\mathrm{D}_k P_{n-1}(x)&=\mathrm{D}_k[x P_n(x)]\\
&=\mathrm{D}_k[x]\mathrm{A}_k P_n(x)+\mathrm{A}_k[x]\mathrm{D}_k P_n(x)\\
&=\left|\cos\left(\frac{\pi}{k}\right)\right|x \mathrm{D}_k P_n(x)+\mathrm{A}_k P_n(x);
\end{align*}
multiplication with $P_j(x)$, integration with respect to $\mu$ and the equations \eqref{eq:kappandeffirst} and \eqref{eq:alphandeffirst} yield the second assertion.
\end{proof}

The recurrence relation \eqref{eq:recurrencekappasieved} for $(\kappa_n(\cdot;k))_{n\in\mathbb{N}_0}$ is the analogue to $q$- and non-sieved variants which can be found in \cite{IO11,LO08}.

We now come to the proofs of Theorems~\ref{thm:mainsievedaveraging} and~\ref{thm:mainsievedaskeywilson}.

\begin{proof}[Proof of Theorem~\ref{thm:mainsievedaveraging}]
The case $k=1$ is trivial because $\mathrm{A}_1=\id$, so let $k\geq2$ from now on (in particular, we then have $|\cos(\pi/k)|=\cos(\pi/k)$). ``(i) $\Rightarrow$ (ii)'': let $n\in\mathbb{N}_0$ and $i\in\{0,\ldots,k-1\}$. Using Theorem~\ref{thm:mainsieved} and \eqref{eq:Akdef}, we have
\begin{align*}
&T_{k n+i}\left(\cos\left(\frac{\pi}{k}\right)\right)P_{k n+i}(x;k)-\mathrm{A}_k P_{k n+i}(x;k)\\
&\quad=\sum_{j=0}^{\lfloor\frac{n}{2}\rfloor}p_n(j)\left[T_{k n+i}\left(\cos\left(\frac{\pi}{k}\right)\right)-T_{k n-2j k-i}\left(\cos\left(\frac{\pi}{k}\right)\right)\right]T_{k n-2j k-i}(x)\\
&\quad\phantom{=}{}+\sum_{j=0}^{\lfloor\frac{n}{2}\rfloor}q_n(j)\left[T_{k n+i}\left(\cos\left(\frac{\pi}{k}\right)\right)-T_{k n-2j k+i}\left(\cos\left(\frac{\pi}{k}\right)\right)\right]T_{k n-2j k+i}(x).
\end{align*}
For each $j\in\{0,\ldots,\left\lfloor n/2\right\rfloor\}$ (except the case $n$ even and $j=n/2$, but then $p_n(j)=0$), we compute
\begin{align*}
T_{k n+i}\left(\cos\left(\frac{\pi}{k}\right)\right)-T_{k n-2j k-i}\left(\cos\left(\frac{\pi}{k}\right)\right)&=\cos\left(\frac{(k n+i)\pi}{k}\right)-\cos\left(\frac{(k n-2j k-i)\pi}{k}\right)\\
&=-2\sin((n-j)\pi)\sin\left(\frac{(j k+i)\pi}{k}\right)\\
&=0.
\end{align*}
In the same way, we have
\begin{equation*}
T_{k n+i}\left(\cos\left(\frac{\pi}{k}\right)\right)-T_{k n-2j k+i}\left(\cos\left(\frac{\pi}{k}\right)\right)=0
\end{equation*}
for all $j\in\{0,\ldots,\left\lfloor n/2\right\rfloor\}$. Hence, we see that $P_{k n+i}(x;k)$ is an eigenvector of $\mathrm{A}_k$ which corresponds to the eigenvalue $T_{k n+i}(\cos(\pi/k))$. ``(ii) $\Rightarrow$ (iii)'' is trivial from orthogonality. \text{``(iii) $\Rightarrow$ (i)''}: the condition and Lemma~\ref{lma:last} yield
\begin{equation}\label{eq:centralequation}
4\sum_{j=1}^n a_{j-1}c_j=n+1\quad \text{if}\quad k\,{\not|}\, n\in\mathbb{N}.
\end{equation}
It is immediate from \eqref{eq:centralequation} that $c_1=1/2$. Let $n\in\mathbb{N}$ be such that $k$ does not divide $n+1$. Moreover, assume that, for each $j\in\{1,\ldots,n\}$, $c_j=1/2$ if $j$ is not a multiple of $k$. Decomposing $n+1=k l+i$ with unique $l\in\mathbb{N}_0$, $i\in\{1,\ldots,k-1\}$, on the one hand, we get
\begin{equation*}
k l+i+1=4\sum_{j=1}^{k l+i}a_{j-1}c_j=4\sum_{j=1}^{k l}a_{j-1}c_j+4\sum_{j=k l+1}^{k l+i}a_{j-1}c_j=k l+2c_{k l}+4\sum_{j=k l+1}^{k l+i}a_{j-1}c_j
\end{equation*}
from \eqref{eq:centralequation}, which then simplifies to
\begin{equation}\label{eq:centrallemmafirst}
\frac{c_{k l}}{2}+\sum_{j=k l+1}^{k l+i}a_{j-1}c_j=\frac{i+1}{4}.
\end{equation}
On the other hand, we compute
\begin{equation}\label{eq:centrallemmasecond}
\frac{c_{k l}}{2}+\sum_{j=k l+1}^{k l+i}a_{j-1}c_j=\begin{cases} \dfrac{c_{k l}}{2}+a_{k l}c_{k l+1}, & i=1, \vspace{1mm}\\
 \dfrac{i}{4}+\dfrac{c_{k l+i}}{2}, & \text{else}. \end{cases}
\end{equation}
Combining \eqref{eq:centrallemmafirst} and \eqref{eq:centrallemmasecond}, we obtain that $c_{n+1}=c_{k l+i}=1/2$. This finishes the proof of \text{``(iii) $\Rightarrow$ (i)''}. Concerning the remaining assertion, it suffices to show that \[P_n(\cos(\pi/k);k)=T_n(\cos(\pi/k)),\qquad n\in\mathbb{N}_0;\]
this can be seen as follows: let $l\in\mathbb{N}_0$. Since $T_{k l}(\cos(\pi/k))=(-1)^l$, and since
\begin{equation*}
T_{k l-1}\left(\cos\left(\frac{\pi}{k}\right)\right)=(-1)^l\cos\left(\frac{\pi}{k}\right)=T_{k l+1}\left(\cos\left(\frac{\pi}{k}\right)\right)
\end{equation*}
for $l\neq0$, we have
\begin{equation*}
\cos\left(\frac{\pi}{k}\right)T_{k l}\left(\cos\left(\frac{\pi}{k}\right)\right)=a_l T_{k l+1}\left(\cos\left(\frac{\pi}{k}\right)\right)+c_l T_{k l-1}\left(\cos\left(\frac{\pi}{k}\right)\right).
\end{equation*}
Consequently, the recurrence relations for the sequences \[(P_n(\cos(\pi/k);k))_{n\in\mathbb{N}_0}\qquad \text{and}\qquad (T_n(\cos(\pi/k)))_{n\in\mathbb{N}_0}\] coincide.
\end{proof}

\begin{proof}[Proof of Theorem~\ref{thm:mainsievedaskeywilson}]
First, note that in view of \eqref{eq:kernelstarprior} (iii) is an obvious reformulation of~(ii). Moreover, the case $k=1$ is just Theorem~\ref{thm:lasserobermaierref} because $\mathrm{D}_1=\mathrm{d}/\mathrm{d}x$ (concerning (iv), we refer to the proof given in \cite{LO08}), so let $k\geq2$ from now on. ``(i) $\Rightarrow$ (ii)'': we know from Theorem~\ref{thm:mainsievedaveraging} \text{``(i) $\Rightarrow$ (ii)''} and Lemma~\ref{lma:productrule} that
\begin{align}
&a_n\kappa_{n+1}(j;k)+c_n\kappa_{n-1}(j;k)\nonumber\\
&\quad=\cos\left(\frac{\pi}{k}\right)[a_j\kappa_n(j+1;k)+c_j\kappa_n(j-1;k)]+T_n\left(\cos\left(\frac{\pi}{k}\right)\right)\frac{\delta_{n,j}}{h(n)}, \qquad n,j\in\mathbb{N}_0,\label{eq:productrulespecial}
\end{align}
and we now use induction on $n$ to deduce that
\begin{equation}\label{eq:kappaind}
\kappa_n(n-1-2j;k)=\frac{U_{n-1}\left(\cos\left(\frac{\pi}{k}\right)\right)}{c_n h(n)}, \qquad j\in\left\{0,\ldots,\left\lfloor\frac{n-1}{2}\right\rfloor\right\}
\end{equation}
for each $n\in\mathbb{N}$, which implies the assertion due to \eqref{eq:kernelstarprior} and \eqref{eq:kappandefsecond}. It is obvious from Lemma~\ref{lma:last} that \eqref{eq:kappaind} is satisfied for $n\in\{1,2\}$, so let $n\in\mathbb{N}\backslash\{1\}$ be arbitrary but fixed now and assume~\eqref{eq:kappaind} to hold for both $n-1$ and $n$. If $j\in\{1,\ldots,\left\lfloor n/2\right\rfloor\}$, then \eqref{eq:productrulespecial} implies
\begin{equation}\label{eq:threecases}
c_{n+1}h(n+1)\kappa_{n+1}(n-2j;k)=\cos\left(\frac{\pi}{k}\right)\frac{U_{n-1}\left(\cos\left(\frac{\pi}{k}\right)\right)}{c_n}-U_{n-2}\left(\cos\left(\frac{\pi}{k}\right)\right)\frac{a_{n-1}}{c_{n-1}},
\end{equation}
and we distinguish two cases: if $k\,{\not|}\,n$, then \eqref{eq:threecases} yields
\begin{equation*}
c_{n+1}h(n+1)\kappa_{n+1}(n-2j;k)=2\cos\left(\frac{\pi}{k}\right)U_{n-1}\left(\cos\left(\frac{\pi}{k}\right)\right)-U_{n-2}\left(\cos\left(\frac{\pi}{k}\right)\right)
\end{equation*}
because $a_{n-1}=c_{n-1}=1/2$ (if $k\,{\not|}\,(n-1)$) or $U_{n-2}(\cos(\pi/k))=0$ (if $k|(n-1)$), which, by the recurrence relation for the Chebyshev polynomials of the second kind, simplifies to
\begin{equation*}
c_{n+1}h(n+1)\kappa_{n+1}(n-2j;k)=U_n\left(\cos\left(\frac{\pi}{k}\right)\right).
\end{equation*}
If $k|n$, however, we compute from \eqref{eq:threecases}
\begin{gather*}
c_{n+1}h(n+1)\kappa_{n+1}(n-2j;k)=-U_{n-2}\left(\cos\left(\frac{\pi}{k}\right)\right)=U_n\left(\cos\left(\frac{\pi}{k}\right)\right).
\end{gather*}
If one takes into account Lemma~\ref{lma:last} for the remaining case ``$j=0$'' again, the proof of the direction ``(i) $\Rightarrow$ (ii)'' is finished. ``(ii) $\Rightarrow$ (iv)'' is trivial from \eqref{eq:kernelstarprior}. The direction \text{``(iv) $\Rightarrow$ (i)''} is more involved; we first deal with $c_1$ and $c_2$: due to the conditions $\kappa_3(0;k)=\sigma(3;k)$ and $\kappa_4(1;k)=\sigma(4;k)$, a tedious but straightforward calculation based on Lemmas~\ref{lma:last} and~\ref{lma:productrule} yields
\begin{gather}\label{eq:initialfirst}
U_2\left(\cos\left(\frac{\pi}{k}\right)\right)[1-4c_1+4c_1c_2]=3-4c_1-4c_2+4c_1c_2
\end{gather}
and
\begin{align*}
&\cos\left(\frac{\pi}{k}\right)U_2\left(\cos\left(\frac{\pi}{k}\right)\right)[3-4c_1-8c_2+4c_1c_2+8c_2c_3]\\
&\quad=\cos\left(\frac{\pi}{k}\right)[9-12c_1-12c_2-8c_3+12c_1c_2+16c_2c_3].
\end{align*}
Combining these equations, we obtain
\begin{equation}\label{eq:initialfourth}
\cos\left(\frac{\pi}{k}\right)U_2\left(\cos\left(\frac{\pi}{k}\right)\right)[c_1-c_2-c_1c_2+c_2c_3]=\cos\left(\frac{\pi}{k}\right)[-c_3+2c_2c_3].
\end{equation}
To get another equation for $c_1$, $c_2$, $c_3$, we first note that Lemma~\ref{lma:last} and \eqref{eq:Chebrecurrencesim} imply that
\begin{align*}
\mathrm{A}_k P_4(x)={}&T_4\left(\cos\left(\frac{\pi}{k}\right)\right)P_4(x)\\
&+\left[T_2\left(\cos\left(\frac{\pi}{k}\right)\right)-T_4\left(\cos\left(\frac{\pi}{k}\right)\right)\right]\frac{a_1-c_3}{a_3}P_2(x)+\alpha_4(0;k)\\
={}&T_4\left(\cos\left(\frac{\pi}{k}\right)\right)[P_4(x)-1]\\&+\left[T_2\left(\cos\left(\frac{\pi}{k}\right)\right)-T_4\left(\cos\left(\frac{\pi}{k}\right)\right)\right]\frac{a_1-c_3}{a_3}[P_2(x)-1]+\mathrm{A}_k P_4(1),
\end{align*}
which, since
\begin{equation*}
P_4(x)=\frac{1}{8a_1a_2a_3}T_4(x)+\frac{a_1-c_3}{2a_1a_3}T_2(x)+\frac{8a_1a_2a_3-4a_1a_2+4a_2c_3-1}{8a_1a_2a_3}
\end{equation*}
due to \eqref{eq:recfund}, becomes
\begin{align}
\mathrm{A}_k P_4(x)={}&T_4\left(\cos\left(\frac{\pi}{k}\right)\right)P_4(x)+\left[T_2\left(\cos\left(\frac{\pi}{k}\right)\right) -T_4\left(\cos\left(\frac{\pi}{k}\right)\right)\right]\frac{a_1-c_3}{a_3}P_2(x)\nonumber\\
&+\left[T_4\left(\cos\left(\frac{\pi}{k}\right)\right)-1\right]\left[\frac{1}{8a_1a_2a_3}-1\right]-\left[1-T_2\left(\cos\left(\frac{\pi}{k}\right)\right)\right]
\frac{a_1-c_3}{2a_1a_3}\nonumber\\
&-\left[T_2\left(\cos\left(\frac{\pi}{k}\right)\right)-T_4\left(\cos\left(\frac{\pi}{k}\right)\right)\right]\frac{a_1-c_3}{a_3}\label{eq:initialthirdpre}
\end{align}
after another tedious but straightforward calculation which uses \eqref{eq:Akdef}. We then take into account that Lemma~\ref{lma:productrule} and the conditions $\kappa_3(0;k)=\sigma(3;k)$, $\kappa_4(1;k)=\sigma(4;k)$ and $\kappa_5(0;k)=\kappa_5(2;k)(=\sigma(5;k))$ yield $\alpha_4(0;k)=\alpha_4(2;k)$, which can be rewritten as
\begin{align}
&U_2\left(\cos\left(\frac{\pi}{k}\right)\right)\big[1-8c_1+8c_1^2+16c_1c_2-16c_1^2c_2-8c_1c_2^2-8c_1c_2c_3+8c_1^2c_2^2+8c_1c_2^2c_3\big]\nonumber\\
&\quad=3-4c_1-4c_2-4c_3+4c_1c_2+8c_1c_3+4c_2c_3-8c_1c_2c_3\label{eq:initialthird}
\end{align}
as a consequence of \eqref{eq:initialthirdpre} and the simplifications
\begin{align*}
&T_2\left(\cos\left(\frac{\pi}{k}\right)\right)-T_4\left(\cos\left(\frac{\pi}{k}\right)\right)=2U_2\left(\cos\left(\frac{\pi}{k}\right)\right)\sin^2\left(\frac{\pi}{k}\right),\\
&T_4\left(\cos\left(\frac{\pi}{k}\right)\right)-1=-\left[2U_2\left(\cos\left(\frac{\pi}{k}\right)\right)+2\right]\sin^2\left(\frac{\pi}{k}\right),\\
&1-T_2\left(\cos\left(\frac{\pi}{k}\right)\right)=2\sin^2\left(\frac{\pi}{k}\right).
\end{align*}
Now, the combination of \eqref{eq:initialfirst} with \eqref{eq:initialthird} gives
\begin{align*}
&U_2\left(\cos\left(\frac{\pi}{k}\right)\right)\big[c_1-2c_1^2-3c_1c_2+4c_1^2c_2+2c_1c_2^2+2c_1c_2c_3-2c_1^2c_2^2-2c_1c_2^2c_3\big]\\
&\quad=c_3-2c_1c_3-c_2c_3+2c_1c_2c_3;
\end{align*}
dividing this by $1-c_2$, we finally obtain
\begin{equation}\label{eq:initialfifth}
U_2\left(\cos\left(\frac{\pi}{k}\right)\right)\big[c_1-2c_1^2-2c_1c_2+2c_1^2c_2+2c_1c_2c_3\big]=c_3-2c_1c_3.
\end{equation}
From now on, we consider the nonlinear system \eqref{eq:initialfirst}, \eqref{eq:initialfourth}, \eqref{eq:initialfifth}, which depends on $k$. If $k=2$, then \eqref{eq:initialfirst} immediately gives $c_1=1/2$ because $U_2(0)=-1$. In the following, let $k\geq3$; here, $\cos(\pi/k)\neq0$ and \eqref{eq:initialfourth} reduces to
\begin{equation}\label{eq:initialfourthred}
U_2\left(\cos\left(\frac{\pi}{k}\right)\right)[c_1-c_2-c_1c_2+c_2c_3]=-c_3+2c_2c_3.
\end{equation}
Combining \eqref{eq:initialfirst} and \eqref{eq:initialfifth}, we see that
\begin{align}
&[3-4c_1-4c_2+4c_1c_2]\big[c_1-2c_1^2-2c_1c_2+2c_1^2c_2+2c_1c_2c_3\big]\nonumber\\
&\quad=U_2\left(\cos\left(\frac{\pi}{k}\right)\right)[1-4c_1+4c_1c_2]\big[c_1-2c_1^2-2c_1c_2+2c_1^2c_2+2c_1c_2c_3\big]\nonumber\\
&\quad=[1-4c_1+4c_1c_2][c_3-2c_1c_3].\label{eq:initialI}
\end{align}
In the same way combining \eqref{eq:initialfirst}, \eqref{eq:initialfourthred} on the one hand and \eqref{eq:initialfifth}, \eqref{eq:initialfourthred} on the other hand, we get
\begin{align*}
&[3-4c_1-4c_2+4c_1c_2][c_1-c_2-c_1c_2+c_2c_3]=[1-4c_1+4c_1c_2][-c_3+2c_2c_3],\\
&[c_3-2c_1c_3][c_1-c_2-c_1c_2+c_2c_3]=\big[c_1-2c_1^2-2c_1c_2+2c_1^2c_2+2c_1c_2c_3\big][-c_3+2c_2c_3],
\end{align*}
which can be rewritten as
\begin{align}
&\big[1-4c_1+c_2+8c_1c_2-4c_2^2-4c_1c_2^2\big]c_3\nonumber\\
&\quad=-3c_1+3c_2+4c_1^2+3c_1c_2-4c_2^2-8c_1^2c_2+4c_1^2c_2^2,\label{eq:initialIIadd} \\
&\big[c_2-4c_1c_2^2\big]c_3=-2c_1+c_2+4c_1^2+3c_1c_2-8c_1^2c_2-4c_1c_2^2+4c_1^2c_2^2.\label{eq:initialII}
\end{align}
Moreover, we can deduce from \eqref{eq:initialIIadd} and \eqref{eq:initialII} that
\begin{align*}
&\big[-3c_1+3c_2+4c_1^2+3c_1c_2-4c_2^2-8c_1^2c_2+4c_1^2c_2^2\big]\big[c_2-4c_1c_2^2\big]\\
&\quad=\big[1-4c_1+c_2+8c_1c_2-4c_2^2-4c_1c_2^2\big]c_3\big[c_2-4c_1c_2^2\big]\\
&\quad=\big[1-4c_1+c_2+8c_1c_2-4c_2^2-4c_1c_2^2\big]\\
&\phantom{\quad=}{}\times\big[-2c_1+c_2+4c_1^2+3c_1c_2-8c_1^2c_2-4c_1c_2^2+4c_1^2c_2^2\big],
\end{align*}
which now considerably simplifies to
\begin{equation*}
[4c_1c_2-4c_1+1][2c_1c_2-2c_1+c_2][2c_2-1][2c_1-1]=0.
\end{equation*}
If $4c_1c_2-4c_1+1=0$, it is immediate from \eqref{eq:initialfirst} that $c_2=1/2$. If $2c_1c_2-2c_1+c_2=0$, then~\eqref{eq:initialfirst} reads $U_2(\cos(\pi/k))[1-2c_2]=3-6c_2$, and we can conclude that $c_2=1/2$, too \big(because $U_2(\cos(\pi/k))=4(\cos(\pi/k))^2-1<3$\big). If $c_2=1/2$, then \eqref{eq:initialII} reduces to $(1-2c_1)(1-c_1-c_3)=0$, and if additionally $c_3=1-c_1$, then \eqref{eq:initialI} implies that $c_1=1/2$. Finally, if $c_1=1/2$, then~${c_2=1/2}$ because otherwise \eqref{eq:initialfirst} would yield ${-1\!=\!U_2(\cos(\pi/k))\!=\!\sin(3\pi/k)/\sin(\pi/k)\!\geq\!0}$. Therefore, allowing $k\geq2$ to be arbitrary again, we have seen that $c_1=1/2$ in each possible case, and that $c_2=1/2$ if $k\neq2$. Now let $n\in\mathbb{N}\backslash\{1\}$ be arbitrary but fixed and assume that, for all $j\in\{1,\ldots,n\}$, $c_j=1/2$ if $j$ is not a multiple of $k$. Lemma~\ref{lma:productrule} and the conditions $\kappa_{n+1}(n-2;k)=\sigma(n+1;k)$ and $\kappa_{n+2}(n-1;k)=\sigma(n+2;k)$ yield
\begin{equation}\label{eq:indsaw1}
a_{n+1}\sigma(n+2;k)+c_{n+1}\sigma(n;k)=\cos\left(\frac{\pi}{k}\right)\sigma(n+1;k)+\alpha_{n+1}(n-1;k).
\end{equation}
Since $\alpha_{n+1}(n-1;k)$ is uniquely determined by the recurrence coefficients $c_1,\ldots,c_n$ (which is already a consequence of \eqref{eq:alphandefsecond}), we have $\alpha_{n+1}(n-1;k)=0$ by Theorem~\ref{thm:mainsievedaveraging} ``(i) $\Rightarrow$ (ii)'' and the induction hypothesis.\footnote{Such arguments will occur several times in our proof, so we give some more details at this stage: compare $(P_j(x))_{j\in\mathbb{N}_0}$ to the sequence \smash{$\big(\widetilde{P}_j(x)\big)_{j\in\mathbb{N}_0}$} which is given by $\widetilde{c}_j:=c_j$ for $j\in\{1,\ldots,n\}$ and $\widetilde{c}_j:=1/2$ otherwise. Then \smash{$\big(\widetilde{P}_j(x)\big)_{j\in\mathbb{N}_0}$} is $k$-sieved, so $0=\widetilde{\alpha}_{n+1}(n-1;k)=\alpha_{n+1}(n-1;k)$. We used similar ideas also in our paper~\cite{Ka16a}.} Therefore, \eqref{eq:indsaw1} simplifies to
\begin{equation*}
a_{n+1}\sigma(n+2;k)+c_{n+1}\sigma(n;k)=\cos\left(\frac{\pi}{k}\right)\sigma(n+1;k),
\end{equation*}
and then multiplication with $a_n c_n h(n)$, Lemma~\ref{lma:last} and the recurrence relation for the Chebyshev polynomials of the second kind imply that
\begin{equation}\label{eq:indsaw2}
\cos\left(\frac{\pi}{k}\right)U_n\left(\cos\left(\frac{\pi}{k}\right)\right)(1-2c_{n+1})c_n=U_{n-1}\left(\cos\left(\frac{\pi}{k}\right)\right)(1-2c_n)c_{n+1}.
\end{equation}
If $k\geq3$ and if $n+1$ is not a multiple of $k$, then $\cos(\pi/k)U_n(\cos(\pi/k))\neq0$; consequently, \eqref{eq:indsaw2} tells $c_{n+1}=1/2$ because $U_{n-1}(\cos(\pi/k))=0$ or $c_n=1/2$, depending on whether $k$ divides $n$ or not. The case $k=2$ is more involved---here, \eqref{eq:indsaw2} does not allow for any conclusion. Instead, we apply an idea which we similarly used in \cite{Ka16a} (at that time for the operators $\mathcal{D}_q$ and $\mathcal{A}_q$; cf. particularly \cite[Lemma 3.1, proof of Theorem 2.3]{Ka16a}) and proceed as follows: by the conditions, there is some $m\in\{0,\ldots,\left\lfloor(n-2)/2\right\rfloor\}$ such that ${\kappa_{n+3}(n-2-2m;2)=\sigma(n+3;2)}$. Since $c_1,\ldots,c_n$ fix the first $n+2$ polynomials $P_0(x),\ldots,P_{n+1}(x)$, $c_1,\ldots,c_n$ yield unique $\lambda_0,\ldots,\lambda_{n+2},\nu_0,\ldots,\nu_{n-1}\in\mathbb{R}$ such that
\begin{align*}
\mathrm{A}_2[x P_{n+1}(x)]={}&\lambda_0+x\sum_{j=0}^{n+1}\lambda_{j+1}P_j(x)\\
={}&\lambda_0+\lambda_2c_1+\sum_{j=1}^n[\lambda_j a_{j-1}+\lambda_{j+2}c_{j+1}]P_j(x)\\
&+\lambda_{n+1}a_n P_{n+1}(x)+\lambda_{n+2}a_{n+1} P_{n+2}(x)\\
={}&\sum_{j=0}^{n-1}\nu_j P_j(x)+[\lambda_n a_{n-1}+\lambda_{n+2}c_{n+1}]P_n(x)\\
&+\lambda_{n+1}a_n P_{n+1}(x)+\lambda_{n+2}a_{n+1} P_{n+2}(x).
\end{align*}
Therefore, $c_1,\ldots,c_n$ determine the integral
\begin{equation*}
\int_\mathbb{R}\!\mathrm{A}_2[x P_{n+1}(x)]P_{n-2-2m}(x)\,\mathrm{d}\mu(x)=\frac{\nu_{n-2-2m}}{h(n-2-2m)}
\end{equation*}
uniquely, so \eqref{eq:recfund} and Theorem~\ref{thm:mainsievedaveraging} ``(i) $\Rightarrow$ (ii)'' imply that
\begin{equation*}
\int_\mathbb{R}\!\mathrm{A}_2[x P_{n+1}(x)]P_{n-2-2m}(x)\,\mathrm{d}\mu(x)=0
\end{equation*}
as a consequence of the induction hypothesis. Hence, using Theorem~\ref{thm:mainsievedaveraging} ``(i) $\Rightarrow$ (ii)'' again, \eqref{eq:recfund} and the fact that $\alpha_n(n-2-2m;2)$ is uniquely determined by $c_1,\ldots,c_n$, too (use \eqref{eq:alphandefsecond}), we see that
\begin{equation}\label{eq:alphaequalI}
0=a_{n+1}\alpha_{n+2}(n-2-2m;2)+c_{n+1}\alpha_n(n-2-2m;2)=a_{n+1}\alpha_{n+2}(n-2-2m;2).
\end{equation}
Furthermore, we have
\begin{equation}\label{eq:alphaequalII}
\alpha_{n+2}(n-2-2m;2)=\alpha_{n+2}(n;2),
\end{equation}
which can be seen as follows: on the one hand, since $\kappa_{n+1}(n-2-2m;2)$ is uniquely determined by $c_1,\ldots,c_n$ (this is a consequence of \eqref{eq:kappandefsecond}), one has $\kappa_{n+1}(n-2-2m;2)=\sigma(n+1;2)$ by the already established direction ``(i) $\Rightarrow$ (ii)''; hence, by Lemma~\ref{lma:productrule} we have
\begin{equation*}
a_{n+2}\sigma(n+3;2)+c_{n+2}\sigma(n+1;2)=\alpha_{n+2}(n-2-2m;2).
\end{equation*}
On the other hand, by Lemma~\ref{lma:productrule} and the condition $\kappa_{n+3}(n;2)=\sigma(n+3;2)$, we have
\begin{equation*}
a_{n+2}\sigma(n+3;2)+c_{n+2}\sigma(n+1;2)=\alpha_{n+2}(n;2),
\end{equation*}
so \eqref{eq:alphaequalII} is established. Combining \eqref{eq:alphaequalII} with \eqref{eq:alphaequalI}, we obtain that
\begin{equation*}
\alpha_{n+2}(n;2)=0.
\end{equation*}
Now following the proof of Theorem~\ref{thm:mainsievedaveraging} ``(iii) $\Rightarrow$ (i)'', we can conclude that $c_{n+1}=1/2$ if $n+1$ is odd, which finishes the proof of Theorem~\ref{thm:mainsievedaskeywilson}.
\end{proof}

\begin{Remark}
We note that the implication ``(iv) $\Rightarrow$ (i)'' of Theorem~\ref{thm:mainsievedaskeywilson} remains valid if (iv) is weakened in the following way:
\begin{itemize}\itemsep=0pt
\item If $k\in\{1,2\}$, it suffices to require that $\kappa_{2n+1}(2n-2;k)=\sigma(2n+1;k)$ for all $n\in\mathbb{N}$ and that for every $n\in\mathbb{N}$ there is an $m\in\{0,\ldots,n-1\}$ such that $\kappa_{2n+3}(2m;k)=\sigma(2n+3;k)$. For $k=1$ (which yields a characterization of the ultraspherical polynomials), this is a~consequence of \cite[Theorem~2.1]{Ka16a}. The case $k=2$, however, is obvious from Theorem~\ref{thm:mainsievedaskeywilson} because $\mathrm{D}_2 P_{2n}(x)=0$ $(n\in\mathbb{N}_0)$ by \eqref{eq:Dkdef} without any further restriction.
\item If $k\geq3$, one can drop the requirement ``there is an $m\in\{0,\ldots,\left\lfloor(n-1)/2\right\rfloor\}$ such that $\kappa_{n+4}(n-1-2m;k)=\sigma(n+4;k)$'' for all $n\geq2$. This can be seen from the proof above.
\end{itemize}
\end{Remark}

\subsection*{Acknowledgements}

The research was begun when the author worked at Technical University of Munich, and the author gratefully acknowledges support from the graduate program TopMath of the ENB (Elite Network of Bavaria) and the TopMath Graduate Center of TUM Graduate School at Technical University of Munich. The research was continued and completed at RWTH Aachen University. The author also thanks the referees for carefully reading the manuscript, as well as for their valuable comments.

\pdfbookmark[1]{References}{ref}
\LastPageEnding

\end{document}